\definecolor{mygreen}{rgb}{0,0.6,0}
\definecolor{mygray}{rgb}{0.5,0.5,0.5}
\definecolor{mymauve}{rgb}{0.58,0,0.82}
\tiny\color{mygray}, 
\definecolor{labelkey}{rgb}{0,0.08,0.45}
\definecolor{refkey}{rgb}{0,0.6,0.0}
\definecolor{Brown}{rgb}{0.45,0.0,0.05}
\definecolor{dgreen}{rgb}{0.00,0.49,0.00}
\definecolor{dblue}{rgb}{0,0.08,0.75}
\DeclareSymbolFont{largesymbolsA}{U}{jkpexa}{m}{n}
\DeclareMathSymbol{\varprod}{\mathop}{largesymbolsA}{16}
\author{Cristian Vega}
\title{\sffamily Alternating and randomized  on monotone inclusions
\footnote{Contact author: 
Cristian Vega Cere\~no, {\ttfamily cristian.vega.14@sansano.usm.cl},
}}
\author{Cristian Vega$^1$ 
\\[5mm]
\small $\!^1$Universidad T\'ecnica Federico Santa Mar\'ia\\
\small Departamento de Matem\'atica\\
}
\date{\ttfamily \today}
\newcommand{\Frac}[2]{\displaystyle{\frac{#1}{#2}}} 
\newcommand{\scal}[2]{{\left\langle{{#1}\mid{#2}}\right\rangle}}
\newcommand{\pscal}[2]{\langle\langle{#1}\mid{#2}\rangle\rangle} 
\newcommand{\menge}[2]{\big\{{#1}~ \colon~{#2}\big\}}
\newcommand{\HH}{\ensuremath{{\mathcal H}}}
\newcommand{\HHH}{\ensuremath{\boldsymbol{\mathcal H}}}
\newcommand{\GG}{\ensuremath{{\mathcal G}}}
\newcommand{\emp}{\ensuremath{{\varnothing}}}
\newcommand{\Id}{\ensuremath{\operatorname{Id}}\,}
\newcommand{\RR}{\ensuremath{\mathbb{R}}}
\newcommand{\ran}{\ensuremath{\operatorname{ran}}}
\newcommand{\RP}{\ensuremath{\left[0,+\infty\right[}}
\newcommand{\RPP}{\ensuremath{\left]0,+\infty\right[}}
\newcommand{\RX}{\ensuremath{\left]-\infty,+\infty\right]}}
\newcommand{\NN}{\ensuremath{\mathbb N}}
\newcommand{\exi}{\ensuremath{\exists\,}}
\newcommand{\weakly}{\ensuremath{\:\rightharpoonup\:}}
\newcommand{\argmin}{\ensuremath{\operatorname{argmin}}}
\newcommand{\Fix}{\ensuremath{\operatorname{Fix}}}
\newcommand{\dom}{\ensuremath{\operatorname{dom}}}
\newcommand{\prox}{\ensuremath{\operatorname{prox}}}
\newcommand{\sri}{\ensuremath{\operatorname{sri}}}
\newcommand{\infconv}{\ensuremath{\mbox{\small$\,\square\,$}}}
\newcommand{\vertiii}[1]{{\left\vert\kern-0.25ex\left\vert\kern-0.25ex\left\vert #1 
\right\vert\kern-0.25ex\right\vert\kern-0.25ex\right\vert}}
\newcommand{\normi}{\vert\kern-0.25ex\vert\kern-0.25ex\vert}
\ProvideTextCommand{\DJ}{OT1}{\raisebox{0.25ex}{-}\kern-0.4em D}
\numberwithin{equation}{section}
\begin{document}

\title{Random Activations in Primal-Dual Splittings for 
Monotone Inclusions with a priori Information}

\author{Luis Briceño-Arias   \and  Julio Deride \and Cristian Vega}

\institute{ Luis Briceño-Arias \at
             Universidad Técnica 
Federico Santa María \\
              Santiago, Chile, luis.briceno@usm.cl  
           \and
           Julio Deride \at
             Universidad Técnica 
Federico Santa María \\
              Santiago, Chile, julio.deride@usm.cl
              \and 
              Cristian Vega \at
             Universidad Técnica 
Federico Santa María \\
              Santiago, Chile, cristian.vega.14@sansano.usm.cl
}

\date{Received: date / Accepted: date}

\maketitle

\begin{abstract}
In this paper, we propose a numerical approach for solving 
composite primal-dual monotone inclusions with a priori information. 
The 
underlying a priori information set is represented by the intersection of 
fixed point sets 
of a finite number of operators,  and we propose and algorithm that 
activates the corresponding set 
by following a finite-valued random variable at each iteration. 
Our formulation is flexible and includes, for instance,
deterministic and Bernoulli 
activations over cyclic schemes, and Kaczmarz-type random 
activations. The 
almost sure convergence of the algorithm 
is obtained by means of properties of stochastic Quasi-Fej\'er 
sequences. We also recover several 
primal-dual algorithms for monotone inclusions in the 
context without a priori information and classical algorithms for 
solving convex feasibility problems and linear systems. 
In the context of convex optimization with inequality constraints, 
any selection of the constraints defines the a priori
information set, in which case the operators involved are simply 
projections onto half spaces. By incorporating random projections 
onto a selection of the constraints to classical primal-dual schemes, 
we obtain faster algorithms as we illustrate by means of a numerical 
application to a stochastic
arc capacity expansion 
problem in a transport network.
\end{abstract}
\keywords{Arc capacity expansion in 
	traffic networks \and Monotone operator theory \and  Primal-dual 
	splitting 
algorithms \and  Randomized Kaczmarz algorithm 
\and Stochastic 
Quasi-Fej\'er sequences.}
\subclass{47H05 \and 49M29 \and  90B15 \and 65K10 \and 65K05}


\section{Introduction}

We devote this paper to develop an efficient numerical algorithm for 
solving primal–dual monotone inclusions 
involving a priori information on the  primal solutions. Primal-dual 
inclusions have found many applications, such as evolution inclusions 
\cite{Attouch-BA, Peypouquet-Sorin}, 
variational inequalities\cite{Siopt1,Gabay,Tseng I}, partial differential 
equations (PDEs) \cite{ABRS, Gabay}, and Nash 
equilibria \cite{Nash equilibria}. In particular, when the monotone 
operators are 
subdifferentials of convex functions, the inclusion we study reduces to 
an optimization problem with a priori information. This problem arises 
in many applications, such as arc capacity expansion in traffic networks
\cite{Xiaojun Chen}, image recovery \cite{BACPP,Peypouquet, 
	Chambolle-Lions}, and signal processing \cite{prox,mms1,SVVA1}.

The a priori information is modeled by  the  intersection of fixed point 
sets of a  finite  number of averaged nonexpansive operators. In the 
particular case when these operators are projections onto closed 
convex sets, the a priori information is represented by their 
intersection. 
More generally, if the operators are resolvents,
the a priori information set models common solutions to several
convex optimization problems and monotone inclusions. 

In the absence of a priori information, the studied problem can be 
solved by \cite{Vu}, and by \cite{Condat} in the convex optimization 
case.
These methods generalize several classical algorithms for monotone 
inclusions and convex optimization as, e.g., 
\cite{Lionsmercier,PPA,Chambolle- Pock}. 
In the case when 
the a priori 
information is the fixed point set of a single operator,
an extension of the method in \cite{Vu} including 
the activation of the operator at each iteration is proposed
in \cite{BA-Lopez}. This 
method is applied to linearly constrained convex optimization 
problems, 
in which the a priori information set is a selection of the constraints.
This formulation leads to an extension of the method in \cite{Condat}, 
which includes a projection onto the set defined by the selected 
constraints. This enforces 
feasibility on primal iterates on the chosen set, resulting in a more 
efficient algorithm as 
verified numerically.

The previous approach opens the question on finding
an appropriate manner to select and 
project onto the constraints, in order to induce more efficient methods. 
In the particular context of convex feasibility, several projecting 
schemes are proposed in the literature, e.g.,  
\cite{Kaczmarz,Strohmer-Vershynin,Convexfeasibility,plc97}.
In the case of solving overdetermined consistent linear systems,  a 
cyclic deterministic projection scheme over the 
hyperplanes generated by each of the equalities is proposed 
in \cite{Kaczmarz}. A randomized version of the method in 
\cite{Kaczmarz} is derived in
\cite{Strohmer-Vershynin}, where the probability of activation of each 
hyperplane is proportional to the  
the size of its normal vector.
As a consequence, the method exhibits an exponential 
convergence rate in expectation.
Beyond consistent linear systems, several projecting 
schemes are proposed in \cite{Convexfeasibility,plc97} for the convex 
feasibility problem, including static, cyclic, and quasi-cyclic projections.
A random block coordinate method using parallel 
Bernoulli activation is proposed in \cite{Siopt6} for the resolution of 
monotone inclusion problems.

In the context of convex optimization with a priori information defined 
by the intersection of convex sets, this paper aims at combining 
previous projection schemes with the primal-dual method in 
\cite{Condat}. In the more general context of monotone inclusions 
with a priori information, our goal is to extend this idea to combine 
several activation schemes on the operators 
defining the a priori information set with the primal-dual splitting in 
\cite{Vu} for monotone inclusions. As a result, we obtain a 
generalization of the 
methods in \cite{BA-Lopez,Vu,Kaczmarz,Strohmer-Vershynin,Siopt6}  
and a unified manner to activate the operators, including 
\cite{Kaczmarz,Strohmer-Vershynin,Siopt6} and some schemes in 
\cite{Convexfeasibility,plc97}.

We illustrate the numerical efficiency of our method in the 
arc capacity expansion problem in transport networks, 
corresponding to a convex optimization with linear inequality 
constraints. We provide 13 algorithms by varying the projecting 
schemes, and we compare their performance for solving this problem.
We observe an improvement up to 35\% in computational time
for the algorithms including randomized and alternating 
projections with respect to the method without projections, 
justifying the 
advantage of our approach.

This paper is organized as follows. In Section~\ref{sec:NP} we 
introduce our notation and some preliminaries. In 
Section~\ref{sec:Main} we provide the 
main algorithm, we prove its almost sure weak convergence, 
and we exploit the flexibility of our approach obtaining several 
schemes available in the literature. 
In Section~\ref{sec:traffic} we implement different activation schemes
in the context of the
arc capacity expansion problem in transport networks and we compare 
their efficiency with respect to the method without any activation.
Finally, we provide some conclusions and perspectives.

\section{Notation and Preliminaries}
\label{sec:NP}
Troughout this paper, $\HH$ stands for a real separable Hilbert space,
the identity operator on $\mathcal{H}$ is denoted by $\Id$, and 
$\rightharpoonup$ and $\rightarrow$ denote
weak and strong convergence in $\mathcal{H}$, respectively. The set 
of weak sequential cluster points of a sequence 
$(x_n)_{n\in\mathbb{N}}$ in $\mathcal{H}$ is denoted by 
$\mathfrak{W}\left(x_{n}\right)_{n \in \mathbb{N}}$. 
The projector operator onto a nonempty closed convex set $S 
\subset \mathcal{H}$ is denoted by $P_{S}$, its normal 
cone is denoted by $N_{S}$,
and its strong relative 
interior is denoted by $\sri(S)$. Given $\alpha\in\left]0, 
1\right[$, an 
operator $T :\mathcal{H} \rightarrow 
\mathcal{H}$ is $\alpha$-averaged nonexpansive iff, for  every $x$ 
and $y$ in $\HH$, we have $\|T 
x - Ty\|^2 \leq \|x - y\|^2 -\frac{1-\alpha}{\alpha}\|(\Id-T)x- (\Id-T)y\|^2$. 
Let 
$M:\mathcal{H}\rightrightarrows \mathcal{H}$ be a set-valued 
operator. We denote by $\operatorname{dom} M$ the domain of 
$M$,  by $\ran(M)$ its range of $M$, and by $\operatorname{gra} M$
its graph. The inverse $M^{-1}$ of $M$ is the operator defined by
$M^{-1}\colon u\mapsto \menge{x\in\HH}{u \in M x}$. Given 
$\rho\geq 0$, $M$ is 
$\rho-$strongly monotone iff, for every $(x,u)$ and $(y,v)$ in 
$\operatorname{gra}(M)$, $\langle x - y\mid u - v\rangle \geq \rho 
\|x - y\|^2$, it is $\rho-$cocoercive iff $M^{-1}$ is $\rho-$strongly 
monotone, $M$ is monotone iff it is $0-$strongly monotone, and it is 
maximally monotone iff its graph is maximal, in the 
sense of inclusions in $\HH \times \HH$, among the graphs of 
monotone operators. The resolvent of $M$ is denoted by 
$J_{M}=(\Id+M)^{-1}$. If $M$
is maximally monotone, then $J_{M}$ is single-valued and 
$1/2-$averaged nonexpansive operator, and $\dom J_{M} = 
\mathcal{H}$. The parallel 
sum of $A:\mathcal{H}\rightrightarrows \mathcal{H}$ and 
$B:\mathcal{H}\rightrightarrows \mathcal{H}$ is defined by $A\infconv B:=\left(A^{-1}+B^{-1}\right)^{-1}$.
We denote by $\Gamma_{0}(\mathcal{H})$ the set of proper, lower
semicontinuous and convex functions from $\HH$ to $\RX$.  The 
subdifferential of $f\in \Gamma_{0}(\mathcal{H})$, denoted by 
$\partial f$, is 
maximally monotone; if $f$ is G\^ateaux differentiable in $x$ then 
$\partial f(x)=\{\nabla f(x)\}$, and we have $(\partial f)^{-1}=\partial 
f^{*}$, where 
$f^{*}\in \Gamma_{0}(\mathcal{H})$ is the Fenchel conjugate of $f$. 
The infimal convolution of 
the two functions $f$ and $g$ from $\HH$ to $]-\infty,+\infty[$ is 
defined by $f\infconv g:x\rightarrow\inf_{y\in\HH}\left(f(y)+g(x-y)\right)$. The proximal operator of $f\in \Gamma_{0}(\mathcal{H})$ is defined by $\operatorname{prox}_{f}: x\mapsto \underset{y \in \mathcal{H}}{\operatorname{argmin}} f(y)+\frac{1}{2}\|x-y\|^{2}$ and we have $J_{\partial f}=\operatorname{prox}_{f}$. 
Moreover, if $S\subset\mathcal{H}$ is a nonempty convex closed 
subset, then $\iota_{S}\in\Gamma_{0}(\HH)$, $N_{S}=\partial 
\iota_{S}$, and $J_{N_{S}}=P_{S}$, where $\iota_{S}$ assigns to 
$x\in\HH$ the value $0$ if $x$ 
belongs to $S$ and $+\infty$, otherwise. For further results on 
monotone operator theory and convex optimization, the reader is 
referred to \cite{Livre1}.

Throughout this paper $(\Omega,\mathcal{X},\mathbb{P})$ is a fixed 
probability space. The 
space of all random variables $z$ with values in $\HH$ such that 
$\|z\|$ is integrable is denoted by 
$L^{1}(\Omega,\mathcal{X},\mathbb{P};\mathcal{H})$. Given a 
$\sigma$-algebra $\mathcal{E}$ of $\Omega$, $x\in 
L^{1}(\Omega,\mathcal{X},\mathbb{P};\mathcal{H})$, and $y\in 
L^{1}(\Omega,\mathcal{X},\mathbb{P};\mathcal{H})$, $y$ is the 
conditional expectation of $x$ with respect to $\mathcal{E}$ iff, 
for every $E\in 
\mathcal{E}$, $\int_{E}xd\mathbb{P}=\int_{E}yd\mathbb{P}$,
in which case we write $y 
=\mathbb{E}(x\hspace{0.05cm}|\hspace{0.05cm}\mathcal{E}) $. The 
characteristic function on $D\subset\Omega$ is denote by 
$\mathds{1}_D$, which is 1 in $D$ and 0 otherwise. An 
$\mathcal{H}-$valued random variable is a measurable map 
$x:(\Omega,\mathcal{X})\rightarrow (\mathcal{H},\mathcal{B})$, 
where $\mathcal{B}$ is the Borel $\sigma$-algebra. The 
$\sigma$-algebra generated by a family $\Phi$ of random variables is 
denoted by $\sigma(\Phi)$. Let $\mathscr{X}=(\mathcal{X}_n)_{n\in 
\mathbb{N}}$ be a sequence of sub-sigma algebras of $\mathcal{X}$ 
such that $(\forall n\in \mathbb{N})\quad\mathcal{X}_{n}\subset 
\mathcal{X}_{n+1}$. We denote by $\ell_+(\mathscr{X})$ the set of 
sequences of $\RP-$valued random variables $(\xi_{n})_{n\in 
\mathbb{N}}$ such that, for every $n\in \mathbb{N}$, $\xi_n$ is 
$\mathcal{X}_{n}$-measurable. We set \begin{align}
(\forall p\in\RPP ) \quad 
\ell_+^{p}(\mathscr{X}):=\left\{\left(\xi_{n}\right)_{n\in \mathbb{N}}\in 
\ell_+(\mathscr{X})\Bigg|\normalsize \sum_{n\in 
\mathbb{N}}\xi_{n}^{p}<+\infty \quad \mathbb{P}\text{-a.s.} 
\right\}.\label{equation 1.9}
\end{align}Equalities and inequalities involving random variables will 
always be understood to hold $\mathbb{P}-$almost surely, even if the 
expression “$\mathbb{P}-$a.s.” is not explicitly written.
\\ \\The following result is an especial case of 
\cite[Theorem~1]{Robb71} and is the main tool to prove the 
convergence of 
Stochastic Quasi-Fej\'er sequences.
\begin{lemma}
\label{l: lemma 1.4}
\textit{\cite[Proposition 2.3]{Siopt6}} Let $\boldsymbol{Z}$ be a 
nonempty 
closed subset of a real Hilbert space $\HHH$, let 
$\left(\boldsymbol{x}^n\right)_{n\in\NN}$ be a sequence of
$\HHH$-valued random variables, and let 
$\mathscr{X}=(\mathcal{X}_{n})_{n\in\mathbb{N}}$ be a 
sequence of sub-sigma algebras of $\mathcal{X}$ such that, for 
every $n\in 
\mathbb{N}$, $\mathcal{X}_n\subset\mathcal{X}_{n+1}$. 
Let $(b_n)_{n\in\mathbb{N}}\in\ell_+(\mathscr{X})$ 
be such that
\begin{align}
(\forall n\in\mathbb{N})\hspace{0.4cm} 
\mathbb{E}(\|\boldsymbol{x}^{n+1}-\boldsymbol{z}\|^2\mid\mathcal{X}_n)+
b_n\leq 
\|\boldsymbol{x}^{n}-\boldsymbol{z}\|^2\hspace{0.4cm} 
\mathbb{P}-\text{a.s.}
\end{align}
Then, $(b_n)_{n\in\NN}\in\ell^1_+(\mathscr{X})$. Moreover,
if $\mathfrak{W}(\boldsymbol{x}^n)_{n\in 
    \mathbb{N}}\subset 
    \boldsymbol{Z}\hspace{2mm}\mathbb{P}-$a.s., then
$\left(\boldsymbol{x}^n\right)_{n\in\NN}$ converges weakly 
    $\mathbb{P}-$a.s. to a $\boldsymbol{Z}-$valued random variable.
\end{lemma}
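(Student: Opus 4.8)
My plan is to prove the two assertions in turn. For the first, fix $\boldsymbol{z}\in\boldsymbol{Z}$ and set $s_n:=\|\boldsymbol{x}^n-\boldsymbol{z}\|^2$; this is a nonnegative $\mathcal{X}_n$-measurable random variable (each $\boldsymbol{x}^n$ being $\mathcal{X}_n$-measurable, as in the stochastic quasi-Fej\'er framework of \cite{Siopt6}), and the hypothesis is exactly $\mathbb{E}(s_{n+1}\mid\mathcal{X}_n)\leq s_n-b_n$ for every $n\in\NN$. This is the situation of the Robbins--Siegmund theorem \cite[Theorem~1]{Robb71} with vanishing multiplicative and additive perturbations, and it yields simultaneously that (a) $(\|\boldsymbol{x}^n-\boldsymbol{z}\|)_{n\in\NN}$ converges $\mathbb{P}$-a.s.\ and (b) $\sum_{n\in\NN}b_n<+\infty$ $\mathbb{P}$-a.s. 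Since $(b_n)_{n\in\NN}\in\ell_+(\mathscr{X})$, statement (b) is precisely $(b_n)_{n\in\NN}\in\ell^1_+(\mathscr{X})$, the first claim.

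For the ``moreover'' part, the difficulty is that the $\mathbb{P}$-null set excluded in (a) depends on $\boldsymbol{z}$, while $\boldsymbol{Z}$ need not be countable; I would handle this by separability. Let $\boldsymbol{D}$ be a countable dense subset of $\boldsymbol{Z}$ (available since $\HHH$ is separable), let $\Omega_{\boldsymbol{z}}$, for $\boldsymbol{z}\in\boldsymbol{D}$, be the $\mathbb{P}$-null set off which $(\|\boldsymbol{x}^n-\boldsymbol{z}\|)_{n\in\NN}$ converges, let $\Omega'$ be the $\mathbb{P}$-null set off which $\mathfrak{W}(\boldsymbol{x}^n)_{n\in\NN}\subset\boldsymbol{Z}$, and set $\widetilde{\Omega}:=\Omega'\cup\bigcup_{\boldsymbol{z}\in\boldsymbol{D}}\Omega_{\boldsymbol{z}}$, still $\mathbb{P}$-null. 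Fix $\omega\notin\widetilde{\Omega}$. Convergence of $(\|\boldsymbol{x}^n(\omega)-\boldsymbol{z}_0\|)_{n\in\NN}$ for a single $\boldsymbol{z}_0\in\boldsymbol{D}$ shows that $(\boldsymbol{x}^n(\omega))_{n\in\NN}$ is bounded, and a routine triangle-inequality estimate together with the density of $\boldsymbol{D}$ in $\boldsymbol{Z}$ upgrades the convergence of $(\|\boldsymbol{x}^n(\omega)-\boldsymbol{z}\|)_{n\in\NN}$ from $\boldsymbol{z}\in\boldsymbol{D}$ to every $\boldsymbol{z}\in\boldsymbol{Z}$.

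For such $\omega$, I would then run the classical Fej\'er/Opial argument pathwise: boundedness of $(\boldsymbol{x}^n(\omega))_{n\in\NN}$ gives a nonempty set of weak sequential cluster points, all lying in $\boldsymbol{Z}$ by the choice of $\omega$, and if $\boldsymbol{a}$ and $\boldsymbol{b}$ are two of them, then $\|\boldsymbol{x}^n(\omega)-\boldsymbol{a}\|^2-\|\boldsymbol{x}^n(\omega)-\boldsymbol{b}\|^2$ converges, hence so does $\langle\boldsymbol{x}^n(\omega)\mid\boldsymbol{a}-\boldsymbol{b}\rangle$, and evaluating the limit along subsequences converging weakly to $\boldsymbol{a}$, respectively to $\boldsymbol{b}$, forces $\boldsymbol{a}=\boldsymbol{b}$; thus $(\boldsymbol{x}^n(\omega))_{n\in\NN}$ converges weakly to some $\boldsymbol{x}(\omega)\in\boldsymbol{Z}$ (see, e.g., \cite{Livre1}). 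Setting $\boldsymbol{x}:=0$ on $\widetilde{\Omega}$, it remains to check measurability of $\boldsymbol{x}$: for each $\boldsymbol{y}\in\HHH$ the map $\omega\mapsto\langle\boldsymbol{x}(\omega)\mid\boldsymbol{y}\rangle$ coincides off $\widetilde{\Omega}$ with $\lim_n\langle\boldsymbol{x}^n(\omega)\mid\boldsymbol{y}\rangle$ and is therefore measurable, so $\boldsymbol{x}$ is weakly measurable and, $\HHH$ being separable, an $\HHH$-valued random variable; it is $\boldsymbol{Z}$-valued $\mathbb{P}$-a.s., and $(\boldsymbol{x}^n)_{n\in\NN}$ converges weakly $\mathbb{P}$-a.s.\ to it.

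I expect the only genuinely non-routine point to be this reduction: converting the $\boldsymbol{z}$-indexed family of almost sure statements in (a) into one almost sure event on which the \emph{whole} sequence converges weakly. That is exactly what the separability of $\HHH$, the density-based extension of the convergence of $(\|\boldsymbol{x}^n-\boldsymbol{z}\|)_n$ to all $\boldsymbol{z}\in\boldsymbol{Z}$, and the Pettis-type measurability of the weak limit are there to supply; the first paragraph is a direct appeal to \cite[Theorem~1]{Robb71}, and indeed the full statement is \cite[Proposition~2.3]{Siopt6}, whose proof follows these lines.
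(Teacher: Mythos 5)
Your proof is correct and follows exactly the route the paper itself indicates: the paper states this lemma without proof, citing \cite[Proposition~2.3]{Siopt6} and noting it is a special case of the Robbins--Siegmund theorem \cite{Robb71}, and your reconstruction (Robbins--Siegmund to get $\sum_n b_n<+\infty$ a.s.\ and a.s.\ convergence of $\|\boldsymbol{x}^n-\boldsymbol{z}\|$, a countable dense subset of $\boldsymbol{Z}$ to collapse the $\boldsymbol{z}$-dependent null sets into one, the pathwise Opial-type uniqueness argument for weak cluster points, and Pettis measurability of the weak limit) is precisely the argument of that reference. You also rightly make explicit the hypotheses the statement leaves implicit---that the inequality is meant for every $\boldsymbol{z}\in\boldsymbol{Z}$, that each $\boldsymbol{x}^n$ is $\mathcal{X}_n$-measurable, and that $\HHH$ is separable---which is exactly how the lemma is used in the proof of Theorem~\ref{A:algorithm 4.1}.
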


\section{Main Problem and Algorithm}
\label{sec:Main}
We consider the following problem.
\begin{problem}
\label{problema 1}
Let $A:\mathcal{H}\rightrightarrows \mathcal{H}$, 
$B:\mathcal{G}\rightrightarrows \mathcal{G}$,  and 
$D:\mathcal{G}\rightrightarrows \mathcal{G}$ be
maximally monotone operators such that $D$ is $\delta-$strongly 
monotone, for some $\delta>0$, let $C:\mathcal{H}\rightarrow 
\mathcal{H}$ be a 
$\mu-$cocoercive operator, for some $\mu>0$,
and let $L:\mathcal{H}\rightarrow\mathcal{G}$ be a nonzero
bounded linear operator. For 
every $i\in\{1,\ldots,m\}$, let $T_i\colon\HH\to\HH$ be a $\alpha_{i}-$ 
averaged nonexpansive operator, for some $\alpha_i\in\left]0,1\right[$, 
and set $S:=\bigcap_{i=1}^{m} 
\Fix\left(T_i\right)$. The 
problem is to find $(x,u)\in (S\times\GG)\cap Z_0$, where 
$Z_0$ is the 
set of primal-dual solutions to  
\begin{equation}
\label{e:Z0}
\text{find } (x,u)\in \HH\times \GG 
\hspace{4mm}\text{s.t.}\hspace{4mm}
\begin{cases}
0\in Ax+L^*u+ Cx,\\
0\in B^{-1}u-Lx+D^{-1}u,
\end{cases}
\end{equation}
and we assume that  
$Z:=(S\times\GG)\cap Z_0\ne\emp$.
\end{problem}
The set $S$ represents a priori information on the primal solution. In 
the case when, for every $i\in\{1,\ldots,m\}$, $T_i=P_{S_i}$ for 
nonempty closed convex sets $(S_i)_{1\le i\le m}$, the a priori 
information set is simply $S=\cap_{i=1}^mS_i$. Alternatively if,  
for every $i\in\{1,\ldots,m\}$, $T_i=J_{M_i}$ for some maximally 
monotone operator $M_i$ defined in $\HH$, 
Problem~\ref{problema 1} reduces to find a common solution to a 
finite number of monotone inclusions, and this approach works in 
more 
general settings by choosing the operators $(T_i)_{1\le i\le m}$ 
appropriately. 

In \cite{BA-Lopez}, Problem \ref{problema 1} is solved when $m=1$, 
by including a deterministic activation of $T_1$. 
This method reduces to the method in \cite{Vu} in the case when 
$T_1=\Id$ and, hence, $S=\HH$.

In the particular case when $A=\partial F$, $B=\partial G$, $C=\nabla 
H$, and $D=\partial 
\ell$, where $F\in\Gamma_{0}(\HH)$, $G\in\Gamma_{0}(\GG)$, 
$H:\HH\rightarrow\mathbb{R}$ is a differentiable convex function with 
$\mu^{-1}-$Lipschitz gradient, and $\ell\in\Gamma_{0}(\GG)$ is a 
$\delta-$ strongly convex, every solution $(x,u)\in Z$ is a solution to 
the following primal optimization problem with a priori information
\begin{align}
\tag{$\mathcal{P}_{1}$}
\mbox{find}\,\, x\in S\cap \argmin_{x\in\HH} 
(F(x)+\left(G\infconv\ell\right)(Lx)+H(x))
\label{equation 1.1}
\end{align}
and its dual problem
\begin{align}
\tag{$\mathcal{D}_1$}
\mbox{find}\,\, u\in\argmin_{u\in\GG} 
(G^{*}(u)+(F+H)^{*}(-L^{*}u)+\ell^{*}(u)).
\label{equation 1.2}
\end{align}
Moreover, if the following qualification condition is 
satisfied
\begin{align}
\label{equation 1.10}
0\in \sri\big(L\left(\dom F\right)-\left(\dom G+\dom\ell\right)\big).
\end{align}
then, by \cite[Proposition 4.3]{svva2} the sets of solutions coincide.
In \cite{Chambolle- Pock} a primal-dual algorithm for solving 
\eqref{equation 1.1}-\eqref{equation 1.2} is proposed, in the case 
when $H=0$, 
$\ell=\delta_{\{0\}}$, and $S=\HH$. In \cite{Condat} the previous 
algorithm is extended to the case $H\ne0$. From \cite{BA-Lopez} we 
obtain a method to solve the case when $m=1$ and $S=\Fix T_1$,
which incorporates a deterministic activation of $T_1$ at each iteration.

A particular instance of \eqref{equation 1.1} is the resolution of 
overdetermined consistent linear systems, in which 
\eqref{equation 1.1} reduces to find $x\in S=\cap_{i=1}^mS_i$,
where, for every $i\in\{1,\ldots,m\}$, $S_i$ is the hyperplane defined 
by a linear equation $r_i^{\top}x=b_i$ in finite dimensions.
In this context, the Kaczmarz 
method implements cyclic projections onto the hyperplanes are 
converging to a feasible 
solution to the problem in \cite{Kaczmarz}.  A randomized version of 
the Kaczmarz method is proposed in in 
\cite{Strohmer-Vershynin} for solving consistent and 
overdetermined linear systems. This Randomized Kaczmarz algorithm 
has an exponential convergence rate  in expectation. In the convex 
feasibility setting, in which $(S_i)_{1\le i\le m}$ are general close 
convex sets, alternative converging projecting schemes are 
proposed in \cite{Convexfeasibility,plc97}.

Previous projecting schemes motivates the following result, which 
combines randomized/alternating activation of 
$\{\Id,T_1,\ldots,T_m\}$ with the primal-dual method in \cite{Vu}.
Our method extends the fixed activation scheme proposed in
\cite{BA-Lopez}. We obtain a weakly convergent 
$\mathbb{P}-$a.s. algorithm, where $\mathbb{P}$ is the probability 
measure associated 
with the the sequence of random variables modelling the operator 
activation. These sequences are defined in the probability space 
$(\Omega,\mathcal{X},\mathbb{P})$.

\begin{theorem}
\label{A:algorithm 4.1}
Consider the setting of Problem~\ref{problema 1}. Let 
$\tau\in\left]0,2\mu\right[$, let $\gamma \in\left]0,2\delta\right[$ be 
such that 
\begin{align}
\| L\|^2< 
\left(\frac{1}{\gamma}-\frac{1}{2\delta}\right)\left(\frac{1}{\tau}-
\frac{1}{2\mu}\right).
\label{equation 4.1.0}
\end{align}
Let $(x^0,\overline{x}^0,u^0)\in 
\mathcal{H}\times\mathcal{H}\times\mathcal{G}$ be such that 
$x^0=\overline{x}^0$ and set $I=\{0,1,...,m\}$. Let 
$(\epsilon_k)_{ k\in \mathbb{N}}$ be a sequence of independent 
random variables such that,  for every 
$k\in\NN$, $\epsilon_k(\Omega)= I_{k}\subset I$ and 
consider the following routine
\begin{align}
(\forall k\in\NN)\hspace{0.2cm}
&\left\lfloor 
\begin{array}{ll}
u^{k+1}=J_{\gamma B^{-1}}\left(u^k+\gamma 
\left(L\bar{x}^k-D^{-1}u^{k}\right)\right)\\
p^{k+1}=J_{\tau A}(x^k-\tau( L^*u^{k+1}+Cx^k))\\
x^{k+1}=T_{\epsilon_{k+1}}p^{k+1}\\
\bar{x}^{k+1}=
x^{k+1}+ p^{k+1}-x^{k},
\label{equation 4.1}
\end{array}
\right.
\end{align}
where $T_{0}=\Id$.
Suppose that one of the following hold:
\begin{enumerate}
\item 
\label{A:algorithm 4.10}
{ $Z_0\subset S\times\GG$.}
\item 
\label{A:algorithm 4.1i}
There exists $N\in\NN\smallsetminus\{0\}$ such that $(\forall 
n\in\mathbb{N})\hspace{0.2cm}I=\bigcup_{k=n}^{n+N-1}I_{k}$ and
\begin{equation}
\label{A:algorithm 4.1ii}
0<\zeta:=\inf\limits_{k\in\mathbb{N}}\min\limits_{i\in I_k\backslash 
	\{0\}}\pi^i_{k},
\end{equation}
where, for every $k\in\NN$ and $i\in I_k$, 
$\pi_k^{i}=\mathbb{P}(\epsilon_k^{-1}(\{i\}))$.
\end{enumerate}
Then $((x^k,u^k))_{k\in\mathbb{N}}$ converges weakly 
$\mathbb{P}-$a.s. to a 
$Z-$valued random variable.
\end{theorem}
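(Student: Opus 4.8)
The plan is to recast the iteration \eqref{equation 4.1} as a stochastic Fej\'er-type recursion in a suitable product space and apply Lemma~\ref{l: lemma 1.4}. First I would introduce the product space $\HHH := \HH\times\GG$ equipped with the renormed inner product induced by the (self-adjoint, positive definite) operator
\[
V\colon (x,u)\mapsto \Big(\tfrac{x}{\tau}-L^*u,\ \tfrac{u}{\gamma}-Lx\Big),
\]
whose positive-definiteness is exactly what the spectral condition \eqref{equation 4.1.0} guarantees; write $\|\cdot\|_V$ for the associated norm and $\langle\cdot\mid\cdot\rangle_V$ for the inner product. The point is that, ignoring the $T_{\epsilon_{k+1}}$ step, the recursion $(u^{k+1},p^{k+1})$ produced from $(x^k,\bar x^k,u^k)$ is precisely one step of the V\~u--Condat forward-backward-type splitting applied to the monotone inclusion \eqref{e:Z0}, and a standard computation (as in \cite{Vu,Condat,BA-Lopez}) shows that for every $(x,u)\in Z_0$, setting $\mathbf z=(x,u)$ and $\mathbf p^{k+1}=(p^{k+1},u^{k+1})$,
\[
\|\mathbf p^{k+1}-\mathbf z\|_V^2 \;\le\; \|\mathbf x^{k}-\mathbf z\|_V^2 \;-\; \beta_k,
\]
where $\mathbf x^k=(x^k,u^k)$ and $\beta_k\ge 0$ collects the cocoercivity/strong-monotonicity residuals (terms controlled by $\mu$, $\delta$, the gap in \eqref{equation 4.1.0}, and the distance $\|p^{k+1}-x^k\|^2$, $\|u^{k+1}-u^k\|^2$). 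This is the deterministic backbone and it follows the cited references verbatim; I would state it as a lemma and only sketch the algebra.

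Next I would handle the a priori-information step $x^{k+1}=T_{\epsilon_{k+1}}p^{k+1}$ and the randomness. Let $\mathcal{X}_k := \sigma\big((x^0,\bar x^0,u^0),\epsilon_1,\dots,\epsilon_k\big)$, so that $(x^k,\bar x^k,u^k,p^{k+1},u^{k+1})$ is $\mathcal{X}_k$-measurable and $x^{k+1}$ depends on $\mathcal{X}_k$ and the fresh independent variable $\epsilon_{k+1}$. Fix $(x^*,u^*)\in Z$, so $x^*\in S=\bigcap_i\Fix T_i$ and in particular $T_i x^* = x^*$ for all $i$. Since each $T_i$ is $\alpha_i$-averaged nonexpansive, for $i\in I_k\setminus\{0\}$,
\[
\|T_i p^{k+1}-x^*\|^2 \;\le\; \|p^{k+1}-x^*\|^2 \;-\; \tfrac{1-\alpha_i}{\alpha_i}\,\|p^{k+1}-T_i p^{k+1}\|^2,
\]
while for $i=0$ we have $T_0=\Id$ and the inequality holds trivially with zero slack. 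Because the dual coordinate is untouched by this step (i.e. the $\GG$-component of $\mathbf x^{k+1}$ equals $u^{k+1}$), and because $V$ couples the coordinates only through the off-diagonal $L$-terms, one gets a controlled comparison between $\|\mathbf x^{k+1}-\mathbf z^*\|_V^2$ and $\|\mathbf p^{k+1}-\mathbf z^*\|_V^2$ of the form
\[
\|\mathbf x^{k+1}-\mathbf z^*\|_V^2 \;\le\; \|\mathbf p^{k+1}-\mathbf z^*\|_V^2 \;+\; c\,\|p^{k+1}-x^{k+1}\|\,\big(\cdots\big)\;-\;(\text{averagedness slack}),
\]
and then take $\mathbb{E}(\cdot\mid\mathcal{X}_k)$: since $\epsilon_{k+1}$ is independent of $\mathcal{X}_k$, the conditional expectation of the $i$-dependent slack term is $\sum_{i\in I_{k+1}}\pi_{k+1}^i(\cdots)$, a genuine convex combination with the weights $\pi^i_{k+1}$. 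Combining with the deterministic inequality yields
\[
\mathbb{E}\big(\|\mathbf x^{k+1}-\mathbf z^*\|_V^2\mid\mathcal{X}_k\big) + b_k \;\le\; \|\mathbf x^{k}-\mathbf z^*\|_V^2,
\]
with $b_k\ge 0$ absorbing both $\beta_k$ and the expected averagedness slacks. Lemma~\ref{l: lemma 1.4} then gives $(b_k)\in\ell^1_+$, hence $\mathbb{P}$-a.s. summability: $\sum_k\|p^{k+1}-x^k\|^2<\infty$, $\sum_k\|u^{k+1}-u^k\|^2<\infty$, and, crucially, $\sum_k \pi_{k+1}^i\|p^{k+1}-T_i p^{k+1}\|^2<\infty$ for each relevant $i$, plus $\mathbb{P}$-a.s. boundedness of $(\mathbf x^k)$.

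The remaining, and in my view the main, obstacle is the cluster-point condition $\mathfrak{W}(\mathbf x^k)_{k}\subset Z$ $\mathbb{P}$-a.s., which is where the two hypotheses diverge. From the summability one deduces $x^k-\bar x^k\to 0$, $p^{k+1}-x^k\to 0$, $u^{k+1}-u^k\to 0$, so along a weakly convergent subsequence $\mathbf x^{k_j}\weakly(\bar x,\bar u)$ one also has $p^{k_j+1}\weakly\bar x$, $\bar x^{k_j}\weakly\bar x$, $u^{k_j+1}\weakly\bar u$; feeding these into the resolvent identities and using maximal monotonicity of the operators $A,B,D,C$ (demiclosedness of their graphs, cocoercivity of $C$) in the standard way shows $(\bar x,\bar u)\in Z_0$. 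Under hypothesis~\ref{A:algorithm 4.10}, $Z_0\subset S\times\GG$ already forces $\bar x\in S$, so $(\bar x,\bar u)\in Z$ and we are done by Lemma~\ref{l: lemma 1.4}. Under hypothesis~\ref{A:algorithm 4.1i} one must instead \emph{extract} membership in $S$: fix $i\in\{1,\dots,m\}$; the window property $I=\bigcup_{k=n}^{n+N-1}I_k$ together with $\zeta>0$ guarantees that within every block of $N$ consecutive indices there is one at which $i$ is activated with probability at least $\zeta$, so from $\sum_k\pi_{k+1}^i\|p^{k+1}-T_ip^{k+1}\|^2<\infty$ and a pigeonhole/sliding-window argument one gets a subsequence (cofinal, hence reaching into any prescribed tail) along which $\|p^{k+1}-T_ip^{k+1}\|\to 0$; combined with $p^{k+1}-x^k\to0$ and the fact that the whole sequence $\|\mathbf x^k-\mathbf z^*\|_V$ is $\mathbb{P}$-a.s. convergent (Fej\'er monotonicity, again from Lemma~\ref{l: lemma 1.4}), one upgrades this to: every weak cluster point of $(x^k)$ lies in $\Fix T_i$. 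Intersecting over $i$ gives $\bar x\in S$. Here the delicate points are (a) making the ``there is a good activation time in each window'' argument interact correctly with passing to a \emph{single} weakly convergent subsequence of the full sequence — this uses the a.s.\ convergence of the Fej\'er quantity to ensure all subsequences have the same limit candidate set — and (b) the measurability bookkeeping so that all ``a.s.'' statements refer to a common full-probability event; both are handled as in \cite[Section~3]{Siopt6} and \cite{BA-Lopez}. Once $\mathfrak{W}(\mathbf x^k)\subset Z$ is established on a full-probability set, Lemma~\ref{l: lemma 1.4} delivers weak $\mathbb{P}$-a.s.\ convergence of $(\mathbf x^k)=((x^k,u^k))_k$ to a $Z$-valued random variable.
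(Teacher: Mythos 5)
Your overall architecture (one-step primal--dual estimate, conditional expectation over the fresh activation, Lemma~\ref{l: lemma 1.4}, then demiclosedness to place cluster points in $Z_0$, and a window argument for case (ii)) matches the paper, but there is a genuine gap in your ``deterministic backbone'' and in how you merge it with the random step. First, the iteration \eqref{equation 4.1} is \emph{not} one step of the V\~u--Condat scheme applied to $(x^k,u^k)$: the extrapolation is $\bar{x}^k=x^k+p^k-x^{k-1}$ with $x^k=T_{\epsilon_k}p^k\neq p^k$, so the cross terms in the one-step estimate appear as $\langle L(p^{k+1}-x^k)\mid u^{k+1}-\hat u\rangle-\langle L(p^{k}-x^{k-1})\mid u^{k}-\hat u\rangle$ and do not telescope inside a two-component norm of $(x^k,u^k)$; the references you invoke ``verbatim'' treat $\bar x^k=2x^k-x^{k-1}$ only. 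Second, and more seriously, your metric $V\colon(x,u)\mapsto(x/\tau-L^*u,\,u/\gamma-Lx)$ couples $x$ and $u$ through $L$, so replacing $p^{k+1}$ by $x^{k+1}=T_{\epsilon_{k+1}}p^{k+1}$ perturbs the cross term by a quantity of order $\|L\|\,\|x^{k+1}-p^{k+1}\|\,\|u^{k+1}-\hat u\|$, which is \emph{first order} in the activation residual; the averagedness slack is only quadratic in that residual, and any Young-type absorption leaves an $\varepsilon\|u^{k+1}-\hat u\|^2$ term with no matching negative term. Since Lemma~\ref{l: lemma 1.4} admits no error term, your inequality $\mathbb{E}(\|\mathbf x^{k+1}-\mathbf z^*\|_V^2\mid\mathcal X_k)+b_k\le\|\mathbf x^k-\mathbf z^*\|_V^2$ is not established. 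The paper resolves both issues at once by working in the augmented space $\HH\oplus\GG\oplus\HH$ with state $(x^k,u^k,p^k-x^{k-1})$ and a metric whose $x$-block is diagonal ($x/\tau$, no $L$-coupling) while the $L$-coupling is between $u$ and the auxiliary coordinate $p^k-x^{k-1}$: then the conditional expectation of the activation acts only on $\|x^{k+1}-\hat x\|^2$ (handled purely by averagedness), the remaining terms are $\mathcal X_k$-measurable, and the cross terms telescope because the auxiliary coordinate is exactly what enters $\bar x^{k+1}$. Strong monotonicity of this augmented operator is where \eqref{equation 4.1.0} is used. Some such modification of your Lyapunov function is not optional; as written, your step from the backbone to the quasi-Fej\'er inequality would fail.

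In case (ii) your structural idea (vanishing activation residuals plus the $N$-window to transfer the weak limit to indices where $i$ is active) is the right one, but the mechanism you cite for the ``upgrade'' is off: almost sure convergence of the Fej\'er quantity does not make all subsequences share a limit candidate set. What is actually needed is $\|x^{k+1}-x^k\|\to 0$ along the realization, which requires the lower bound $\zeta>0$ in \eqref{A:algorithm 4.1ii} to convert the summability of $\sum_k\sum_i\pi^i_{k+1}\|T_ip^{k+1}-p^{k+1}\|^2$ into $\|T_{\epsilon_k}p^k-p^k\|\to 0$; then $\|x^{j_n}-x^{k_n}\|\le\sum_{l=k_n}^{k_n+N-1}\|x^{l+1}-x^l\|\to0$ lets you pass the weak limit from $x^{k_n}$ to $p^{j_n}$ with $i\in I_{j_n}$, and demiclosedness of $\Id-T_i$ gives $x(w)\in\Fix T_i$. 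Deferring this to the cited references without the $\zeta$-based estimate leaves the key use of hypothesis \eqref{A:algorithm 4.1ii} unexplained.
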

\begin{proof}
Fix $k\in \mathbb{N}$ and $(\hat{x}, \hat{u})\in Z$. It follows from 
\eqref{equation 4.1} that
\begin{align}
\frac{x^{k}-p^{k+1}}{\tau}-L^{*}u^{k+1}-Cx^k\in & \:
A\,p^{k+1}\nonumber\\ 
\frac{u^{k}-u^{k+1}}{\gamma}+L(x^{k}+ p^{k}-x^{k-1})-D^{-1}u^{k}\in&\: 
B^{-1}u^{k+1}.
\label{equation 1.13}
\end{align}
Since $A$ and $B^{-1}$ are maximally monotone operators 
\cite[Theorem 20.25]{Livre1}, we deduce from \eqref{e:Z0} that
\begin{align}
\scal{\frac{x^{k}-p^{k+1}}{\tau}-L^{*}(u^{k+1}-\hat{u})}{p^{k+1}-\hat{x}}
-\scal{Cx^{k}-C\hat{x}}{p^{k+1}-\hat{x} } 
&\nonumber\\+\scal{\frac{u^{k}-u^{k+1}}{\gamma}+L(x^{k}+ 
p^{k}-x^{k-1}-\hat{x})}{u^{k+1}-\hat{u}}&\nonumber\\- 
\scal{D^{-1}u^{k}-D^{-1}\hat{u}}{u^{k+1}-\hat{u}} & \geq 0.
\label{equation 1.14}
\end{align}
Hence, it follows from 
\cite[Lemma~2.12(i)]{Livre1} that
\begin{align}
\frac{\| x^{k}-\hat{x}\|^2 }{\tau}+\frac{\| u^{k}-\hat{u}\|^2}{\gamma} 
\geq&\frac{\| x^{k}-p^{k+1}\|^2}{\tau}+\frac{\| 
p^{k+1}-\hat{x}\|^2}{\tau}+\frac{\| u^{k+1}-u^{k}\|^2}{\gamma} \nonumber \\ &+\frac{\| 
u^{k+1}-\hat{u}\|^2}{\gamma}+2\langle 
L(p^{k+1}-\hat{x})\mid u^{k+1}-\hat{u}\rangle\nonumber\\&-2\langle 
L(x^{k}+p^{k}-x^{k-1}-\hat{x})\mid 
u^{k+1}-\hat{u}\rangle\nonumber\\&+2\scal{D^{-1}u^{k}-D^{-1}\hat{u}}{u^{k+1}-\hat{u}}\nonumber\\&+2\scal{Cx^{k}-C\hat{x}}{p^{k+1}-\hat{x}}.
\label{equation 1.15}
\end{align}
Moreover, Cauchy-Schwartz inequality yields
\begin{align}
&\langle L(p^{k+1}-\hat{x})\mid u^{k+1}-\hat{u}\rangle-\langle 
L(x^{k}+p^{k}-x^{k-1}-\hat{x})\mid 
u^{k+1}-\hat{u}\rangle\nonumber\\
&\hspace{2cm}=\langle L(p^{k+1}-\hat{x})\mid 
u^{k+1}-\hat{u}\rangle-\langle 
L(x^{k}-\hat{x})\mid u^{k+1}-\hat{u}\rangle\nonumber\\ 
&\hspace{2.5cm}-\langle L(p^{k}-x^{k-1})\mid 
u^{k+1}-\hat{u}\rangle\nonumber\\
&\hspace{2cm}=\langle L(p^{k+1}-x^{k})\mid 
u^{k+1}-\hat{u}\rangle-\langle 
L(p^{k}-x^{k-1})\mid 
u^{k+1}-\hat{u}\rangle\nonumber\\
&\hspace{2cm}=
\langle L(p^{k+1}-x^{k})\mid u^{k+1}-\hat{u}\rangle-\langle 
L(p^{k}-x^{k-1})\mid u^{k+1}-u^{k}\rangle\nonumber\\ 
&\hspace{2.5cm}-\langle L(p^{k}-x^{k-1})\mid 
u^{k}-\hat{u}\rangle\nonumber\\ 
&\hspace{2cm}\geq
\langle L(p^{k+1}-x^{k})\mid u^{k+1}-\hat{u}\rangle-\| L\| \| 
p^{k}-x^{k-1}\|  \| u^{k+1}-u^{k}\|\nonumber\\
&\hspace{2.5cm}-\langle L(p^{k}-x^{k-1})\mid 
u^{k}-\hat{u}\rangle\\
&\hspace{2cm}\geq
\langle L(p^{k+1}-x^{k})\mid u^{k+1}-\hat{u}\rangle-
\nu\| L\|^2 \| p^{k}-x^{k-1}\|^2\nonumber\\
&\hspace{2.5cm} -\frac{1}{\nu}\| 
u^{k+1}-u^{k}\|-\langle L(p^{k}-x^{k-1})\mid 
u^{k}-\hat{u}\rangle,
\label{equation 1.16}
\end{align}
for every $\nu>0$.
In addition, the cocoercivity of $C$ and $D^{-1}$ and the inequality 
$ab \leq \mu 
a^{2}+\frac{b^{2}}{4 \mu}$ yield
\begin{align}\left\langle Cx^{k}-C\hat{x}\mid 
p^{k+1}-\hat{x}\right\rangle &=\left\langle Cx^{k}-C\hat{x}\mid 
p^{k+1}-x^{k}\right\rangle+\left\langle Cx^{k}-C\hat{x}\mid 
x^{k}-\hat{x}\right\rangle\nonumber \\ & 
\geq-\|Cx^{k}-C\hat{x}\|\|p^{k+1}-x^{k}\|+\mu\|Cx^{k}-C\hat{x}\|^{2} 
\nonumber\\ & \geq-\frac{\left\|p^{k+1}-x^{k}\right\|^{2}}{4 \mu},
\label{equation 1.17}
\end{align}
and, analogously, $\langle D^{-1}u^{k}-D^{-1}\hat{u} \mid 
u^{k+1}-\hat{u} \rangle\geq-\frac{\left\|u^{k+1}-u^{k}\right\|^{2}}{4 
\delta}$.
\\ \\ Let $(\hat{x},\hat{u})\in Z$ and let $\mathscr{X}=(\mathcal{X}_{k})_{k\in\mathbb{N}}$ be a sequence of sub-sigma-algebras  of $\mathcal{X}$ such that,  for every $ k\in\mathbb{N},\hspace{0.2cm} \mathcal{X}_{k}=\sigma(x^0,...,x^k)$.
It  follows from \eqref{equation 4.1}, the linearity of conditional 
expectation, and the mutual independence of 
$(\epsilon_k)_{k\in\mathbb{N}}$  that
\begin{align}
\label{e:aux2}
\mathbb{E}(\| x^{k+1}-\hat{x}\|^2\mid \mathcal{X}_k 
)&=\mathbb{E}\bigg(\sum_{i\in 
I_{k+1}}\mathds{1}_{\{\epsilon_{k+1}=i\}}\cdot\| 
T_{i}p^{k+1}-\hat{x}\|^2 \mid 
\mathcal{X}_k\bigg)\nonumber\\&=\pi^{0}_{k+1}\|p^{k+1}-
\hat{x}\|^2+\sum_{i\in 
I_{k+1}^{0}}\pi^{i}_{k+1}\|T_{i}p^{k+1}-\hat{x}\|^2,
\end{align}
where, for every $k\in\NN$, 
$I_{k}^{0}:=I_{k}\backslash \{0\}$.  Moreover, since, for every 
$i\in\{1,\ldots,m\}$, $T_i$ is $\alpha_i-$averaged nonexpansive, where
$\alpha_i\in\left]0,1\right[$, and $\hat{x}\in \cap_{i\in I}\Fix T_{i}$,
we have
\begin{equation}
\|T_{i}p^{k+1}-\hat{x}\|^2\le 
\|p^{k+1}-\hat{x}\|^2-\left(\dfrac{1-\alpha_i}{\alpha_i}\right)\|T_{i}p^{k+1}-
 p^{k+1}\|^2.
\end{equation}
Hence, since, for every $k\in\NN$, 
$\sum_{i\in 
I_k}\pi^{i}_{k}=1$, 
from \eqref{e:aux2} we deduce, $\mathbb{P}-$a.s.
\begin{align}
\mathbb{E}(\| x^{k+1}-\hat{x}\|^2\mid \mathcal{X}_k 
)&+\sum_{i\in 
I_{k+1}^{0}}\Frac{\pi^{i}_{k+1}\left(1-\alpha_i\right)}{\alpha_i}\| 
T_{i}p^{k+1}- p^{k+1}\|^2 \leq \| 
p^{k+1}-\hat{x}\|^2.\hspace{0.2cm}\hspace{1,5cm} 
\label{equation 4.4}
\end{align}
Replacing \eqref{equation 1.16}, \eqref{equation 1.17}, and  
\eqref{equation 4.4} in \eqref{equation 1.15} we have, 
$\mathbb{P}-$a.s.
\begin{align}
\frac{\| x^{k}-\hat{x}\|^2 }{\tau}+\frac{\| u^{k}-\hat{u}\|^2}{\gamma} 
&\geq\frac{1}{\tau}\mathbb{E}(\| 
x^{k+1}-\hat{x}\|^2\mid \mathcal{X}_k 
)+\left(\frac{1}{\tau}-\frac{1}{2\mu}\right)\| x^{k}-p^{k+1}\|^2\nonumber\\
&\hspace{0.5cm}+\frac{\| 
u^{k+1}-\hat{u}\|^2}{\gamma}
+\left(\frac{1}{\gamma}
-\frac{1}{2\delta}-\Frac{1}{\nu}\right)\|u^{k+1}-u^{k}\|^2
\nonumber\\
&\hspace{0.5cm}
+2\langle 
L(p^{k+1}-x^{k})\mid u^{k+1}-\hat{u}\rangle
\nonumber\\
&\hspace{0.5cm}-2\langle 
L(p^{k}-x^{k-1})\mid u^{k}-\hat{u}\rangle-\nu\|
L\|^2 \| p^{k}-x^{k-1}\|^2 
\nonumber\\
&  \hspace{0.5cm}
+\sum_{i\in 
I_{k+1}^{0}}\Frac{\pi^{i}_{k+1}\left(1-\alpha_i\right)}{\tau\alpha_i}\| 
T_{i}p^{k+1}- p^{k+1}\|^2.
\label{equation 4.5}
\end{align}
In particular, if we consider
$\nu:=2\left(\frac{1}{\gamma}-\frac{1}{2\delta}+\frac{2\mu\tau\| 
L\|^2}{2\mu-\tau}\right)^{-1}$ and
$\rho:=\frac{1}{2}\left(\frac{1}{\gamma}-\frac{1}{2\delta}-\frac{2\mu\tau\|
L\|^2}{2\mu-\tau}\right)>0$, we have $\nu \| 
L\|^2=\left(\frac{1}{\tau}-\frac{1}{2\mu}\right)\left(1-\nu\rho\right)$ 
and by  \eqref{equation 4.5} we obtain, $\text{$\mathbb{P}$-a.s.}$
\begin{align}
\frac{\| x^{k}-\hat{x}\|^2 }{\tau}+\frac{\| 
u^{k}-\hat{u}\|^2}{\gamma}& 
\geq\frac{1}{\tau}\mathbb{E}(\| 
x^{k+1}-\hat{x}\|^2\mid \mathcal{X}_k 
)+\left(\frac{1}{\tau}-\frac{1}{2\mu}\right)\| 
x^{k}-p^{k+1}\|^2\nonumber\\&\hspace{0.5cm}+\frac{\| 
u^{k+1}-\hat{u}\|^2}{\gamma}+2\langle L(p^{k+1}-x^{k})\mid 
u^{k+1}-\hat{u}\rangle \nonumber\\
&\hspace{0.5cm} -2\langle L(p^{k}-x^{k-1})\mid 
u^{k}-\hat{u}\rangle
\nonumber\\
&\hspace{0.5cm}-\left(\frac{1}{\tau}-\frac{1}{2\mu}\right)
\|p^{k}-x^{k-1}\|^2\nonumber\\
&\hspace{0.5cm}+\rho\|u^{k+1}-u^{k}\|^2 
+\nu\rho\left(\frac{1}{\tau}-\frac{1}{2\mu}\right)
\|p^{k}-x^{k-1}\|^2\nonumber\\
&\hspace{0.5cm}+\sum_{i\in 
I_{k+1}^{0}}\Frac{\pi^{i}_{k+1}\left(1-\alpha_i\right)}{\tau\alpha_i}\|
T_{i}p^{k+1}- p^{k+1}\|^2.
\label{equation 4.6}
\end{align}
Now, let us consider the
self-adjoint linear operator 
$\boldsymbol{V}\colon\HH\oplus\GG
\oplus\HH\to\HH\oplus\GG\oplus\HH$ defined by
\begin{equation}
\boldsymbol{V}\colon (x,u,p)\mapsto
\left(\frac{x}{\tau}, \frac{u}{\gamma} 
+Lp,\left(\frac{1}{\tau}-\frac{1}{2\mu}\right)p+L^*u\right).
\end{equation}
Note that, for every $\boldsymbol{x}:=(x,u,p)\in 
\HH\oplus\GG\oplus\HH$, from 
\eqref{equation 4.1.0} we 
deduce
\begin{align}
\pscal{\boldsymbol{x}}{\boldsymbol{V}\boldsymbol{x}}&=\frac{\|x\|^2}{\tau}
+\frac{\|u\|^2}{\gamma}+2\scal{Lp}{u}+
\left(\frac{1}{\tau}-\frac{1}{2\mu}\right)\|p\|^2\label{e:aux}\\
&\ge \frac{\|x\|^2}{\tau}
+\frac{\|u\|^2}{\gamma}-2\|L\|\|p\|\|u\|+
\left(\frac{1}{\tau}-\frac{1}{2\mu}\right)\|p\|^2\nonumber\\
&\ge \frac{\|x\|^2}{\tau}
+\frac{\|u\|^2}{\gamma}-2\|L\|\|p\|\|u\|+
(\gamma\|L\|^2+\varepsilon)\|p\|^2\nonumber\\
&\ge \frac{\|x\|^2}{\tau}+
\|u\|^2\left(\frac{1}{\gamma}-\frac{1}{e}\right)+
((\gamma-e)\|L\|^2+\varepsilon)\|p\|^2,\nonumber
\end{align}
where 
$\varepsilon=(\frac{1}{\tau}-\frac{1}{2\mu})-\gamma\|L\|^2>0$
and $e>0$ is arbitrary.
Hence, by taking $\gamma <e< \gamma+\varepsilon/\|L\|^2$ we 
deduce that  $\boldsymbol{V}$ is strongly monotone
in $\HH\oplus\GG\oplus\HH$. Define the scalar product 
$\pscal{\cdot}{\cdot}_{\boldsymbol{V}}
=\pscal{\cdot}{\boldsymbol{V}\cdot}$
and set $\HHH$ be the real Hilbert space
$\HH\times\GG\times\HH$ endowed with this scalar product.
We denote by 
$||\cdot||_{\boldsymbol{V}}
=\sqrt{\pscal{\cdot}{\cdot}_{\boldsymbol{V}}}$ the associated 
norm.
Note that, for every $k\in\NN$,
\begin{equation}
\left(\frac{1}{\tau}-\frac{1}{2\mu}\right)\| 
p^{k+1}-x^{k}\|^2+2\langle L(p^{k+1}-x^{k})\mid 
u^{k+1}-\hat{u}\rangle+\frac{\| u^{k+1}-\hat{u}\|^2}{\gamma}
\end{equation}
is $\mathcal{X}_k-$measurable.
 Therefore, by defining, for every $k\in\NN$,
 $\boldsymbol{x}^k=(x^k,u^k,p^k-x^{k-1})\in\HHH$
 and set $\hat{\boldsymbol{x}}=(\hat{x},\hat{u},0)\in Z\times\{0\}$, 
 we deduce from 
\eqref{equation 4.6} and \eqref{e:aux} that
\begin{equation}
\mathbb{E}\left(\|\boldsymbol{x}^{k+1}-
\hat{\boldsymbol{x}}\|^2_{\boldsymbol{V}} \,| \,
\mathcal{X}_k 
\right)+b_k\le \|\boldsymbol{x}^k-
\hat{\boldsymbol{x}}\|^2_{\boldsymbol{V}},
\label{equation 4.9x}
\end{equation}
where 
\begin{multline}
\label{equation 4.8}
b_{k}:=\rho\|u^{k+1}-u^{k}\|^2+\nu\rho\left(\frac{1}{\tau}-\frac{1}{2\mu}\right)\|
p^{k}-x^{k-1}\|^2\\
\hspace{1cm}+\sum_{i\in 
I_{k+1}^{0}}\frac{\pi^{i}_{k+1}\left(1-\alpha_i\right)}{\tau\alpha_i}\| 
T_{i}p^{k+1}- p^{k+1}\|^2
\end{multline}
defines a sequence in $\ell_+(\mathscr{X})$.
By denoting $\boldsymbol{Z}=Z\times\{0\}$, we deduce from 
Lemma~\ref{l: lemma 1.4} that, 
$\mathbb{P}-$a.s.,
\begin{align}
\sum_{k\in\NN} \|u^{k+1}-u^{k}\|^2<+\infty, \hspace{0.2cm}  
\sum_{k\in\NN}\|p^{k}-x^{k-1}\|^2<+\infty,\nonumber\\
\text{and}\quad \sum_{k\in\NN}\sum_{i\in 
	I^{0}_{k+1}}\frac{\pi^{i}_{k+1}\left(1-\alpha_i\right)}{\tau\alpha_i}
\|T_{i}p^{k+1}-	 p^{k+1}\|^2<+\infty.
\label{equation 4.12}
\end{align}
Let $\widetilde{\Omega}$ be the set such that 
$\mathbb{P}(\widetilde{\Omega})=1$ and \eqref{equation 4.12}
holds, and fix $w\in\widetilde{\Omega}$.
Let $\boldsymbol{x}(w):=(x(w),u(w),z(w))
\in\mathfrak{W}(\boldsymbol{x}^{k}(w))_{k\in 
\mathbb{N}}$, say 
\begin{equation}
\boldsymbol{x}^{k_n}(w)
=(x^{k_{n}}(w),u^{k_{n}}(w),p^{k_n}(w)-x^{k_n-1}(w))\weakly 
(x(w),u(w),z(w)).
\end{equation}
Note that, \eqref{equation 4.12} yields
$z(w)=0$ and, therefore, in order to prove that
$\boldsymbol{x}(w)\in\boldsymbol{Z}=Z\times\{0\}$ it is enough to 
prove that $(x(w),u(w))\in Z$.
By defining 
\begin{align}
\label{equation 2.17}
M:(x, u)&\mapsto (Ax + L^{*}
u) \times (B^{-1}u -Lx)\nonumber\\
Q :(x, u) &\mapsto\left(Cx, D^{-1}u\right),
\end{align}   and 
\begin{align}
\label{equation 2.18}
y^k :=&\frac{x^k-p^{k+1}}{\tau}+Cp^{k+1}-Cx^{k}\nonumber\\
v^k 
:=&\frac{u^k-u^{k+1}}{\gamma}+L(x^{k}-p^{k+1}+p^k-x^{k-1})+ 
D^{-1}u^{k+1}-D^{-1}u^{k},
\end{align}
we obtain from \eqref{equation 1.13} that
\begin{align}
(y^k, v^k)\in (M+Q)(p^{k+1},u^{k+1}).
\label{equation 4.16}
\end{align}

Moreover, from  \cite[Proposition 
2.7(iii)]{Siopt1} we have that $M$ is maximally monotone. Since $D$ 
is $\delta-$strongly monotone, 
\cite[Proposition~22.11(ii)]{Livre1} yields its surjectivity. 
Hence,  since $C$ is $\mu-$cocoercive, we deduce that $Q$ is 
$\min\{\mu,\delta\}-$cocoercive and, from \cite[Theorem 
21.1]{Livre1}, that  $\dom 
Q=\HH\times\GG$. Therefore, by \cite[Corollary 25.5(i)]{  Livre1} we 
conclude that $M+Q$ is maximally monotone.
Moreover, it follows from \eqref{equation 4.12}, 
$(x^{k_{n}}(w),u^{k_{n}}(w))\rightharpoonup (x(w),u(w))$,
and from the uniform continuity of $L$, $C$, and $D^{-1}$ that 
$(y^{k_n}(w),v^{k_n}(w))\rightarrow0$ and 
$(p^{k_{n}+1}(w),u^{k_{n}+1}(w))\rightharpoonup(x(w),u(w))$. 
Therefore, we 
deduce from \eqref{equation 4.16} and 
\cite[Proposition~20.37]{Livre1} that 
$(x(w),u(w))\in~Z_0$ and, in the case \ref{A:algorithm 4.10}, 
$(x(w),u(w))\in~Z$ and the result 
follows from Lemma~\ref{l: lemma 1.4}.

In order to prove the convergence under the assumption in 
\ref{A:algorithm 4.1i}, note that
it follows from \eqref{equation 4.1}, \eqref{A:algorithm 4.1ii}, and 
\eqref{equation 4.12} that, 
for every $k\in \NN$,
\begin{align}
\|x^{k}(w)-x^{k-1}(w)\|&=
\|T_{\epsilon_{k}(w)}p^{k}(w)-x^{k-1}(w)\|\nonumber\\
&\le\|T_{\epsilon_{k}(w)}p^{k}(w)-p^{k}(w)\|
+\|p^{k}(w)-x^{k-1}(w)\|\nonumber\\
&\le\sum_{i\in 
	I^0_{k}}
\|T_{i}p^{k}(w)-p^{k}(w)\|+\|p^{k}(w)-x^{k-1}(w)\|\label{e:interm}\\
&\le\frac{\overline{\alpha}}{\zeta (1-\overline{\alpha})}\sum_{i\in 
I^0_{k}}\frac{\pi^{i}_{k+1}\left(1-\alpha_i\right)}{\tau\alpha_i}
\|T_{i}p^{k}(w)-p^{k}(w)\|\nonumber\\
&\hspace{5cm}+\|p^{k}(w)-x^{k-1}(w)\|\nonumber\\
&\to 0,\label{e:auxs}
\end{align}
where $\overline{\alpha}=\max_{i=1,\ldots m}\alpha_i$.
Now, let $N\in\NN\smallsetminus\{0\}$ 
be the integer provided by assumption \ref{A:algorithm 4.1i}, fix  $i\in 
I$, and fix $n\in\NN$. Therefore, \ref{A:algorithm 4.1i} ensures the 
existence of $j_n$ such that $i\in I_{j_n}$ and
$k_{n}+1\leq j_n\leq 
k_{n}+N$ and, from \eqref{e:auxs}
we deduce
\begin{align}
\|x^{j_n}(w)-x^{k_n}(w)\|&\leq 
\sum_{l=k_n}^{j_n-1}\|x^{l+1}(w)-x^{l}(w)\|\nonumber\\
&\leq 
\sum_{l=k_n}^{k_n+N-1}\|x^{l+1}(w)-x^{l}(w))\|\nonumber\\
&\to 0.
\label{equation  4.14}
\end{align}
Hence, from 
$x^{k_n}(w)\weakly x(w)$,
\eqref{equation  4.14}, \eqref{e:interm}, \eqref{e:auxs}, and 
\eqref{equation 4.12}
 we deduce that 
\begin{equation}
p^{j_n}(w)\weakly x(w)\:\: \text{ and  } 
(\Id-T_{i})p^{j_{n}}(w)\rightarrow 0.
\end{equation}
Therefore, since $\Id-T_{i}$ is maximally 
monotone \cite[Example 20.29]{Livre1} and its graph is
 closed in the weak-strong topology \cite[Proposition 20.38]{Livre1}, 
 we deduce $x(w)\in \Fix(T_{i})$.
Since $i\in I$ is arbitrary, we conclude $x(w)\in \bigcap_{i=0}^m 
\Fix\left(T_i\right)=S$
and the result follows from Lemma~\ref{l: lemma 1.4}.
\end{proof}

\begin{remark}
\label{rem:1}
In this remark, we explore the flexibility of our formulation.
\begin{enumerate}	
	\item 
\label{rem:1opti}
\textit{Random projections in convex optimization:} 
Consider the context of primal-dual convex optimization problems
\eqref{equation 1.1}-\eqref{equation 1.2}. For every 
$i\in\{1,\ldots,m\}$, suppose that $T_i=P_{S_i}$, where  $S_i$ is a 
nonempty 
closed 
convex subset of 
$\HH$. Then, $S:=\bigcap_{i=1}^m 
S_i\neq\varnothing$
 and \eqref{equation 4.1} reduces to 
\begin{align}
(\forall k\in\NN)\hspace{0.2cm}
&\left\lfloor 
\begin{array}{ll}
u^{k+1}=\prox_{\gamma g^{*}}(u^k+\gamma 
(L\bar{x}^k-\nabla\ell^{*}(u^{k})))\\
p^{k+1}=\prox_{\tau f}(x^k-\tau( L^*u^{k+1}+\nabla H(x^k)))\\
x^{k+1}=P_{S_{\epsilon_{k+1}}}\,p^{k+1}\\
\bar{x}^{k+1}=
x^{k+1}+ p^{k+1}-x^{k},
\label{equation 4.1.1.1}
\end{array}
\right.
\end{align}
which solves \eqref{equation 1.1}-\eqref{equation 1.2} if 
$(\epsilon_k)_{k\ge 1}$ satisfies \ref{A:algorithm 4.1i}.
This algorithm will be revisited in the application studied in 
Section~\ref{sec:traffic}.

	\item 
\label{rem:1Bernoulli} \textit{Primal-dual with cyclic 
	Bernoulli random activations:}
Suppose that, for every $k\in\NN$,
$I_{k}=\{0,i(k)\}$, where $i\colon \NN\to I$ is a function such that,
for every $n\in\NN$, $i(n,\ldots,n+N-1)=\{1,\ldots,m\}$ and $N$ is 
defined in \ref{A:algorithm 4.1i}.
Moreover, define $(e_k)_{ k \in \mathbb{N}}$ as the sequence of 
independent 
$\{0,1\}-$valued random variables such that, for every $k\in\NN$, 
$e_{k}^{-1}(\{0\})=\epsilon_{k}^{-1}(\{0\})$. Hence, we have
that, for every $n\in\NN$,
\begin{equation}
\label{e:hipot}
\cup_{k=n}^{n+N-1}I_k=\{0\}\cup i(n,\ldots,n+N-1)=I,
\end{equation}
\ref{A:algorithm 4.1i} holds, and the activation step in	
\eqref{equation 4.1} is equivalent to
\begin{align}
\label{e:reduce}
(\forall k\in\NN)\quad 
x^{k+1}=p^{k+1}+e_{k+1}(T_{i(k+1)}\,p^{k+1}-p^{k+1})
=T_{\epsilon_{k+1}}\,p^{k+1},
\end{align}
where $T_{0}=\Id$. Therefore, we deduce that 
a particular instance of \eqref{equation 4.1}is the primal-dual method 
with random Bernoulli
activations as it is used, e.g., in \cite{Siopt6}. In 
particular, if 
$i\colon 
k\mapsto (k\!\!\mod m)+1$, the random Bernoulli activation is 
applied 
over a cyclic order of the operators $(T_i)_{1\leq i\leq m}$.
This is a generalization of the primal-dual method with a 
priori 
information developed in \cite{BA-Lopez}. In addition, if for every $ k 
\in \NN$, $\epsilon_{k}^{-1}\left(\{0\}\right)=\emp$, the 
activations become deterministic and \eqref{A:algorithm 4.1ii} holds. 
In this context, we recover 
\cite{BA-Lopez} by setting $m=1$ and $T_1=T$.

\item \textit{Projections onto convex sets}:
\label{rem:1POCS}
In the context of Remark~\ref{rem:1}\ref{rem:1opti}, suppose that 
$f= 
g=h=L=0$, $\ell=\iota_{\{0\}}$,  
for every $k\in\NN$,
$I_k=\{0,
(k\hspace{0.1cm}\mathrm{mod}\hspace{0.1cm}m)+1\}$, and 
that 
$\epsilon_{k}^{-1}\left(\{0\}\right)=\emp$. Then  
$\prox_{f}(x)=x$, 
$\prox_{ g^{*}}(x)=x-\prox_{g}(x)=0$ 
\cite[Proposition 24.8(ix)]{Livre1},
\ref{A:algorithm 4.1i}
holds,  the activation scheme in \eqref{equation 4.1.1.1} 
reduces to
\begin{align}
(\forall k\in\NN)\hspace{0.2cm}
x^{k+1}=P_{S_{k\,\mathrm{ mod }\,m+1}}x^{k},
\label{equation 2.19}
\end{align}
and we recover the convergence of the method in
\cite[Corollary 5.26]{Livre1}. We can also recover more general 
projection schemes as, for instance, cyclic activations \cite{plc97},
which inspire condition \ref{A:algorithm 4.1i}.

		\item 
\label{rem:1kacz}
\textit{Kaczmarz algorithm:}
In the context of Remark~\ref{rem:1}\ref{rem:1POCS},
assume that $\mathcal{H}=\mathbb{R}^n$,
let $R$ be a full rank $m\times n$ matrix such that $m \leq n$, 
denote the rows of $R$ by $r_1,...,r_m\in\RR^n$,
and set  
$b=(b_1, . . . , b_m)^{\top}\in\mathbb{R}^m$. Moreover, suppose 
that 
$S=\menge{x\in\mathcal{H}}{Rx=b}=\bigcap_{i=1}^{m} 
S_i\not=\emp$, where, for every $i\in\{1,\ldots,m\}$, $S_i=\{x\in 
\RR^n\colon r_i^{\top}x=b_i\}$ is 
nonempty, closed, and convex. Then, \eqref{equation 2.19} 
reduces to
\begin{align}
(\forall k\in\NN)\hspace{0.2cm}
x^{k+1}=x^{k}+\frac{b_{i(k+1)}-
	r_{i(k+1)}^{\top} x^{k}}{\|r_{i(k+1)}\|^2}r_{i(k+1)},
\label{equation 2.20}
\end{align}
which is the Kaczmarz method proposed in  \cite{Kaczmarz} and 
its convergence follows from Theorem~\ref{A:algorithm 4.1}.

\item \textit{Randomized Kaczmarz:}
Consider the setting described in 
Remark~\ref{rem:1}\ref{rem:1kacz}
and let $(\epsilon_k)_{ k 
	\in \mathbb{N}}$ be a sequence of independent $I$-valued 
random 
variables, with $\epsilon_k^{-1}(\{0\})\equiv\emp$, and for every $i\in 
I\smallsetminus\{0\}$ 
and $k\in 
\NN$, 
$\pi_{k}^i$ is 
proportional
to $\|r_i\|^2$. Therefore, as in Remark~\ref{rem:1}\ref{rem:1kacz}, 
\eqref{equation 4.1} reduces to
\begin{align}
(\forall k\in\NN)\hspace{0.2cm}
x^{k+1}=P_{S_{\epsilon_{k+1}}}x^{k}=x^{k}+\frac{b_{\epsilon_{k+1}}- 
	r_{\epsilon_{k+1}}^{\top}x^{k}}{\|r_{\epsilon_{k+1}}\|^2}
r_{\epsilon_{k+1}},
\label{equation 3.16}
\end{align}
which is the method proposed in  \cite{Strohmer-Vershynin} and
its convergence is deduced from Theorem~\ref{A:algorithm 4.1}. 
Note that 
in \cite{Strohmer-Vershynin} the authors obtain exponential 
convergence rate in expectation, while our method has
$\mathbb{P}-$a.s. convergence.

\end{enumerate}

\end{remark}

\section{Application to the arc capacity expansion problem of a 
directed graph}
\label{sec:traffic}

In this section we aim at solving the traffic assignment problem with 
arc-capacity expansion on a network 
with minimal cost under uncertainty. Let  ${\cal{A}}$ be the set of 
arcs and let ${\cal{O}}$ and ${\cal{D}}$ be the sets of origin and 
destination nodes of the network, respectively. The set of 
routes from 
$o\in {\cal{O}}$ to $d\in {\cal{D}}$ is denoted by $R_{od}$ and
$R=\cup_{(o,d)\in {\cal{O}}\times{\cal{D}}} 
R_{od}$ is the set of all routes. The arc-route 
incidence matrix $N\in 
{\mathbb{R}}^{|{\cal{A}}|\times |R|}$ is defined by 
$N_{a,r}=1$, if arc $a$ belongs to the 
route $r$, and $N_{a,r}=0$, otherwise.  

The uncertainty is modeled by a finite set $\Xi$ of
possible scenarios. For every scenario $\xi\in\Xi$, $p_{\xi}\in 
[0,1]$ is its probability of occurrence, 
$h_{od,\xi}\in {\mathbb{R}_+}$ is 
the forecasted demand from $o\in {\cal{O}}$ to $d\in 
{\cal{D}}$, 
$c_{a,\xi}\in {\mathbb{R}}_{+}$ 
is the corresponding capacity of the arc $a\in {\cal{A}}$,
$t_{a,\xi}\colon 
{\mathbb{R}}_{+}\rightarrow {\mathbb{R}}_{+}$ is an 
increasing and $\beta_{a,\xi}-$Lipschitz continuous travel time 
function on arc $a\in {\cal{A}}$, for some $\beta_{a,\xi}>0$, 
and the variable $f_{r,\xi}\in 
{\mathbb{R}_+}$ 
stands for the flow in route $r\in R$. 

In the problem of this section, we consider the expansion of 
flow capacity at each arc in order to improve the efficiency of the 
network operation. We model this decision 
making process in a two-stage stochastic problem. The first stage 
reflects the 
investment in capacity and the second corresponds to the 
operation of the network in an uncertain environment. 

In order to solve this problem, we take a non-anticipativity approach 
\cite{Xiaojun Chen}, 
letting our first stage decision variable depend on the scenario
and	imposing a non-anticipativity constraint. We denote by 
$x_{a,\xi}\in\RR_+$ the variable of capacity expansion on arc 
$a\in\mathcal{A}$ in scenario $\xi\in\Xi$ and the non-anticipativity 
condition is defined by the constraint 
\begin{equation*}
W=\big\{x\in\RR^{|\mathcal{A}|\times|\Xi|}\,:\,(\forall (\xi,\xi')\in 
\Xi^{2})\quad x_{\xi}=x_{\xi'}\big\},
\end{equation*}
where $x_{\xi}\in\RR^{|\mathcal{A}|}$ is
the vector of capacity expansion for scenario $\xi\in\Xi$ and we denote
$f_{\xi}\in\RR^{|R|}$ analogously.
We restrict the capacity expansion variables by imposing, for every 
$a\in \mathcal{A} $ and $\xi\in\Xi$,
$x_{\xi}\in  M:=\bigtimes_{a\in\mathcal{A}}[0,M_a]\subset 
\RR^{|\mathcal{A}|}$, where $M_a>0$ represents the upper 
bound of 
capacity expansion, for every $a\in \mathcal{A}$. Additionally,
we model the investment cost of expansion via a quadratic function
defined by a symmetric positive definite matrix
$Q\in \RR^{|{\cal{A}}|\times |{\cal{A}}|}$.
\begin{problem}
\label{prob_transporte}
The problem is to
\begin{equation*}
\min_{(x,f)\in  (W\cap M^{|\Xi|})\times 
{\mathbb{R}}_{+}^{|R||\Xi|}}  \displaystyle\sum_{\xi\in \Xi} 
p_{\xi}\left[\displaystyle\sum_{a\in {\cal{A}}} 
\displaystyle\int_{0}^{(Nf_{\xi})_a} 
t_{a,\xi}(z)dz+\frac{1}{2}x_{\xi}^{\top}Qx_{\xi}\right]
\end{equation*}
\begin{align}
\text{s.t.}				\hspace{1cm} (\forall \xi\in \Xi)(\forall a\in 
{\cal{A}})\hspace{1cm}
(Nf_{\xi})_a 
-x_{a,\xi}&\leq c_{a,\xi},
\label{rest2_prob_tr}\\
(\forall \xi\in \Xi)(\forall (o,d)\in {\cal{O}}\times{\cal{D}})\hspace{1cm}
\displaystyle\sum_{r\in R_{od}} f_{r,\xi} &=h_{od,\xi},
\label{rest1_prob_tr}
\end{align}
under the assumption that the solution set $Z_1$ 
is nonempty.\end{problem}The first term of the objective function in 
Problem~\ref{prob_transporte} 
represents the expected operational cost of the network. 
The optimality conditions of the optimization problem with this 
objective cost related to the
pure traffic assignment problem, defines a Wardrop equilibrium  
\cite{Beckmann}. The second term in the objective function is the 
expansion investment cost. Constraints in \eqref{rest2_prob_tr} 
represent that, for every arc $a\in\mathcal{A}$, the flow cannot 
exceed the expanded 
capacity $c_{a,\xi}+x_{a,\xi}$ at each scenario $\xi\in\Xi$,
while \eqref{rest1_prob_tr} are the demand constraints.

\subsection{Formulation and Algorithms}

Note that Problem~\ref{prob_transporte} can be equivalently 
written as
\begin{align*}
\label{prob_esto_aplic}\tag{$\mathcal{P}$}
\text{ find  }(x,f)\in S\cap \Bigg(\underset{(x,f)\in\HH}{\argmin\,}
F(x,f)+G(L(x,f))+H(x,f)\Bigg),
\end{align*}
where 
\begin{equation}
\label{e:defsform1}
\begin{cases}
\HH:=\RR^{|\mathcal{A}||\Xi|}\times\RR^{|R||\Xi|}\\[2mm]
(\forall \xi\in\Xi)\quad V^{+}_\xi:=\Big\{f\in {\mathbb{R}}_{+}^{|R|}\,:\, 
(\forall (o,d)\in {\cal{O}}\times{\cal{D}})\,\, \sum_{r\in 
R_{od}} 
f_{r} =h_{od,\xi}\Big\}\\[3mm]
\Lambda:=(M^{|\Xi|}\cap W)\times 
\Big(\bigtimes_{\xi\in\Xi} V_{\xi}^{+}\Big)\\[3mm]
F:=\iota_{\Lambda}\\[2mm]
(\forall \xi\in\Xi)\quad H_{\xi}:=\left\{(x,u)\in 
{\mathbb{R}}^{|{\cal{A}}|}\times{\mathbb{R}}^{|{\cal{A}}|}\,:\, 
(\forall 
a\in {\cal{A}})\,\, u_{a} -x_{a}\leq c_{a,\xi}\right\}\\[3mm]
G:=\iota_{\bigtimes_{\xi \in \Xi} H_{\xi}}\\[2mm]
L\colon (x,f)\mapsto (x_{\xi},Nf_{\xi})_{\xi\in \Xi}\\[2mm]
S:=\cap_{i=1}^{m}S_{i}\supset \dom\, (G\circ L)\\[2mm]
H\colon (x,f) \mapsto \displaystyle\sum_{\xi\in \Xi} 
p_{\xi}\left[\sum_{a\in {\cal{A}}} 
\displaystyle\int_{0}^{(Nf_{\xi})_a} 
t_{a,\xi}(z)dz+\frac{1}{2}x_{\xi}^{\top}Qx_{\xi}\right],
\end{cases}
\end{equation}
and $(S_i)_{1\le i\le m}$ are nonempty closed convex sets
such that $S\ne\emp$. Note that $F$ and $G$ are lower 
semicontinuous 
convex proper 
functions, and $L$ is linear and bounded with $\|L\|\leq 
\max\left\{1,\|N\|\right\}$. Moreover, note that, since 
$(t_{a,\xi})_{a\in\mathcal{A},\xi\in\Xi}$ are increasing, $N$ is linear, 
and $Q$ is definite positive, $H$ is a separable convex function. In 
addition, by defining $$\psi_{\xi}:\RR^{|R|}\mapsto\RR^{|R|}:f\mapsto 
N^{\top}(t_{a,\xi}(N_{a}f))_{a\in\mathcal{A}},$$ simple computations 
yield
\begin{equation*}
\nabla 
H\colon (x,f)\mapsto \left(\left(p_{\xi}Qx_{\xi}\right)_{\xi\in 
\Xi},\left(\psi_{\xi}(f)\right)_{\xi\in\Xi}\right),
\end{equation*}
which  is Lipschitz continuous with constant
\begin{equation*}
\mu^{-1}=\max_{\xi \in \Xi} \left( p_{\xi}
\max\Big\{\|Q\|,\|N\|^{2}\max_{a\in {\cal{A}}} 
\beta_{a,\xi}\Big\}\right).
\end{equation*}
Altogether, \eqref{prob_esto_aplic} is a particular case of 
\eqref{equation 1.1}. Assume that the following
Slater condition 
\begin{multline}
\exi(\hat{x},\hat{f})\in\Lambda
\quad \text{such that}\\
\left(\forall \xi\in\Xi\right)
\left(\forall a\in 
\mathcal{A}\right)\quad\sum_{r\in 
R}N_{a,r}\hat{f}_{r,\xi}-\hat{x}_{a,\xi}< 
c_{a,\xi}
\end{multline}
holds.
Then by \cite[Proposition 27.21]{Livre1} the qualification condition 
\eqref{equation 1.10} is satisfied and, therefore, 
\eqref{prob_esto_aplic} is a 
particular case of Problem~\ref{problema 1}.

Observe that the a priori 
information $S$ is redundant with the objective function, because 
$S=\cap_{i=1}^{m}S_{i}\supset \dom\,(G\circ L)$. 
We will show in Section~\ref{sec:numexp} that this redundant 
formulation has important numerical advantages.
In what follows, we exploit the splittable structure of $S$ and provide
an application of the primal-dual splitting with random projection 
detailed in \eqref{equation 4.1.1.1} in order to solve 
Problem~\ref{prob_transporte}. The next result is a consequence
of Theorem~\ref{A:algorithm 4.1}\ref{A:algorithm 4.10} applied to the 
context of 
\eqref{prob_esto_aplic} as in Remark~\ref{rem:1}\ref{rem:1opti}.
We denote by 
$P_\xi:
\RR^{|\mathcal{A}||\Xi|}\times\RR^{|R||\Xi|}
\mapsto\RR^{|\mathcal{A}|}\times\RR^{|R|}\colon \left(x,f\right)\mapsto 
\left(x_{\xi},f_{\xi}\right)$
the orthogonal projection onto the scenario $\xi\in\Xi$.

\begin{corollary}
\label{C:Corrollary 5.2}
 \label{t:Algoritmo 4}
 Consider the setting of Problem \eqref{prob_esto_aplic}. Let $\gamma 
 >0$ and let $\tau\in\left ]0,2\mu\right[$ be such that
 \begin{equation}
 	\max\{1, \|N\|^2\}< 
 	\frac{1}{\gamma}\left(\frac{1}{\tau}-\frac{1}{2\mu}\right).
 \end{equation}
Let $(u^0,v^0)\in\GG$, let 
 $(x^0,f^0),\hspace{0.1cm}(\overline{x}^0,\overline{f}^0)\in 
 \mathcal{H}^2$ be such that 
 $(x^0,f^0)=(\overline{x}^0,\overline{f}^0)$ and, 
 set $I=\{0,1,...,m\}$. 
 Let $(\epsilon_k)_{ k 
 	\in \mathbb{N}}$ be a sequence of independent random variables 
 	such  that,  for every 
 $k\in\NN$, $\epsilon_k(\Omega)= I_{k}\subset I$ and 
  consider the following routine\\
  
 \scalebox{0.94}{\parbox{\linewidth}{\begin{align}
(\forall k\in\NN)\hspace{0.2cm}
&\left\lfloor 
\begin{array}{ll}\text{For every }\xi\text{ in }\Xi\\ \left\lfloor 
\begin{array}{l}
\left(\widetilde{u}_{\xi}^{k+1},\widetilde{v}_{\xi}^{k+1}\right)
=\left(u^{k}_{\xi}+\gamma\bar{x}_{\xi}^k,\hspace{0.1cm}v_{\xi}^{k}
+\gamma N\bar{f}^k_{\xi}\right)\\ 
\left(u_{\xi}^{k+1},v_{\xi}^{k+1}\right)
=\left(\widetilde{u}^{k+1}_{\xi},\hspace{0.1cm}
\widetilde{v}^{k+1}_{\xi}\right)-\gamma 
P_{H_{\xi}}\left(\gamma^{-1}\left(\widetilde{u}^{k+1}_{\xi},
\widetilde{v}^{k+1}_{\xi}\right)\right)\\(\widetilde{p}_{\xi}^{k+1},
\widetilde{g}_{\xi}^{k+1})=\left(x^k_{\xi}-\tau 
(u^{k+1}_{\xi}+p_{\xi}Qx^k_{\xi}),\hspace{0.1cm}f^k_{\xi}-\tau( 
N^{\top}v^{k+1}_{\xi}+ p_{\xi}\psi_{\xi}(f^k_{\xi}))\right)\\ 
(p_{\xi}^{k+1},g_{\xi}^{k+1})=\left(P_{\xi}\left(P_{M^{|\Xi|}\cap 
W}\left(\widetilde{p}^{k+1}\right)\right),\hspace{0.1cm}
P_{V^{+}_{\xi}}\left(\widetilde{g_{\xi}}^{k+1}\right)\right)   
\end{array}\right.\\
(x^{k+1},f^{k+1})=P_{S_{\epsilon_{k+1}}}\left(p^{k+1}, 
g^{k+1}\right)\\
(\bar{x}^{k+1},\bar{f}^{k+1})=
\left(x^{k+1},f^{k+1}\right)+ \left(p^{k+1},g^{k+1}\right)-\left(x^{k},f^{k}\right),
\end{array}
\right.
\label{routine 4}
\end{align}}}\\

where $S_0=\HH$.
Then $((x^k,f^k))_{k\in\mathbb{N}}$ converges $\mathbb{P}-$a.s. to a 
$Z_1-$valued random variable.
 \end{corollary}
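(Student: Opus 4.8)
The plan is to realize \eqref{routine 4} as the concrete instance, for the data in \eqref{e:defsform1}, of the primal--dual recursion with random projections of Remark~\ref{rem:1}\ref{rem:1opti} --- that is, of \eqref{equation 4.1.1.1}, which is the case $T_i=P_{S_i}$ of \eqref{equation 4.1} --- and then to apply Theorem~\ref{A:algorithm 4.1}\ref{A:algorithm 4.10}. Three things have to be checked: (a) that \eqref{prob_esto_aplic} fits the setting of Problem~\ref{problema 1}; (b) that the absorption hypothesis $Z_0\subset S\times\GG$ holds; and (c) that \eqref{routine 4} is \eqref{equation 4.1.1.1} written blockwise.

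For (a), set $A=\partial F$, $B=\partial G$, $C=\nabla H$ and $\ell=\iota_{\{0\}}$, so that $D=\partial\ell=N_{\{0\}}$. Since $\dom D=\{0\}$, any two points of $\gr D$ share the first coordinate $0$, so $D$ is $\delta$-strongly monotone for \emph{every} $\delta>0$; this is precisely why the corollary requires only $\gamma>0$ and not $\gamma\in\left]0,2\delta\right[$. Given $\gamma>0$ and $\tau\in\left]0,2\mu\right[$ as in the statement and using $\|L\|\le\max\{1,\|N\|\}$, the right-hand side of \eqref{equation 4.1.0} tends to $\gamma^{-1}(\tau^{-1}-(2\mu)^{-1})>\max\{1,\|N\|^2\}\ge\|L\|^2$ as $\delta\to+\infty$, so \eqref{equation 4.1.0} and $\gamma<2\delta$ both hold once $\delta$ is large enough; fix such a $\delta$. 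The remaining hypotheses of Problem~\ref{problema 1} are immediate: $F,G\in\Gamma_0$ make $A,B$ maximally monotone; $H$ is convex with $\mu^{-1}$-Lipschitz gradient (computed after \eqref{e:defsform1}), hence $C=\nabla H$ is $\mu$-cocoercive by the Baillon--Haddad theorem; $L$ is nonzero, bounded and linear; and each $T_i=P_{S_i}$ is $1/2$-averaged nonexpansive with $S=\bigcap_iS_i\ne\varnothing$. As $\HH$ is finite-dimensional, weak and strong convergence coincide, so the conclusion ``converges $\mathbb{P}$-a.s.'' matches ``converges weakly $\mathbb{P}$-a.s.'' of Theorem~\ref{A:algorithm 4.1}.

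Step (b) is the only substantive point. Since $\ell^{*}\equiv 0$ we have $D^{-1}=\partial\ell^{*}\equiv\{0\}$, so for any $\big((x,f),(u,v)\big)\in Z_0$ the second inclusion in \eqref{e:Z0} reduces to $L(x,f)\in\partial G^{*}(u,v)$, equivalently $(u,v)\in\partial G\big(L(x,f)\big)$; in particular $L(x,f)\in\dom G$, i.e.\ $(x,f)\in\dom(G\circ L)$, and by construction $\dom(G\circ L)\subset S$ in \eqref{e:defsform1}. Hence $Z_0\subset S\times\GG$, so $Z=Z_0$ and hypothesis \ref{A:algorithm 4.10} of Theorem~\ref{A:algorithm 4.1} is in force. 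That $Z\ne\varnothing$ follows from the standing assumption $Z_1\ne\varnothing$: the Slater condition and \cite[Proposition~27.21]{Livre1} yield the qualification condition \eqref{equation 1.10}, whence by \cite[Proposition~4.3]{svva2} the primal component of $Z$ coincides with the solution set of \eqref{prob_esto_aplic}, which --- the constraint $S$ being redundant --- is exactly $Z_1$.

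For (c) one rewrites \eqref{equation 4.1.1.1} for the data \eqref{e:defsform1}; this is where the (elementary) computation lies. The term $\nabla\ell^{*}$ vanishes; Moreau's identity, together with the separability of $G$ over $\xi\in\Xi$, turns $\prox_{\gamma g^{*}}$ into $\Id$ minus $\gamma$ times the blockwise projections $P_{H_\xi}$ evaluated at $\gamma^{-1}\,\cdot\,$, giving the $(u^{k+1},v^{k+1})$-update; $\prox_{\tau f}=P_\Lambda$ splits as $P_{M^{|\Xi|}\cap W}$ on the $x$-block and the blockwise $P_{V^{+}_{\xi}}$ on the $f$-block, which combined with the explicit forms of $L$, $L^{*}$ and $\nabla H$ gives the $(p^{k+1},g^{k+1})$-update (with $P_\xi$ extracting the $\xi$-coordinate); and $x^{k+1}=T_{\epsilon_{k+1}}p^{k+1}$ with $T_i=P_{S_i}$ and $S_0=\HH$ becomes $(x^{k+1},f^{k+1})=P_{S_{\epsilon_{k+1}}}(p^{k+1},g^{k+1})$. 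Therefore \eqref{routine 4} is \eqref{equation 4.1.1.1}, and Theorem~\ref{A:algorithm 4.1}\ref{A:algorithm 4.10} gives that $\big((x^k,f^k),(u^k,v^k)\big)_{k\in\NN}$ converges $\mathbb{P}$-a.s.\ to a $Z$-valued random variable; passing to the $(x,f)$-marginal and using (b) yields the claim. The main obstacle is organizational rather than analytic: carefully tracking the block structure over $\xi\in\Xi$ when matching the abstract resolvent/prox steps of \eqref{equation 4.1} with the explicit projections in \eqref{routine 4}, and isolating the inclusion $\dom(G\circ L)\subset S$ that makes \ref{A:algorithm 4.10} applicable, so that no activation condition on $(\epsilon_k)_{k\in\NN}$ beyond that of Problem~\ref{problema 1} is needed.
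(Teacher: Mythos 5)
Your proposal is correct and follows the same route the paper intends: the corollary is obtained by casting \eqref{prob_esto_aplic} as an instance of Problem~\ref{problema 1} (with $\ell=\iota_{\{0\}}$, so $D=N_{\{0\}}$ is $\delta$-strongly monotone for every $\delta>0$, which absorbs the $\gamma<2\delta$ and \eqref{equation 4.1.0} requirements), noting that $Z_0\subset S\times\GG$ because $S\supset\dom(G\circ L)$, rewriting \eqref{equation 4.1.1.1} blockwise over $\xi\in\Xi$ to get \eqref{routine 4}, and invoking Theorem~\ref{A:algorithm 4.1}\ref{A:algorithm 4.10}. Your write-up in fact supplies details (the choice of $\delta$, the explicit verification of $Z_0\subset S\times\GG$ via the second inclusion in \eqref{e:Z0}) that the paper leaves implicit.
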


\begin{remark}
\label{R: remark6}
\leavevmode	In order to implement the algorithm in \eqref{routine 4} 
we 
need to compute the following projections:
\begin{enumerate}[ label={\arabic*.} ]
\item\label{R: remark6.1} It follows from 
\cite[Proposition~24.11]{Livre1} that
\begin{align}
(\forall \xi\in\Xi)\quad 
P_{H_{\xi}}\colon (x,u)\mapsto \left(P_{H_{a,\xi}}(x_{a,\xi}, 
u_{a,\xi})\right)_{a\in\mathcal{A}}
\end{align}
and, by applying \cite[Example 29.20]{Livre1}, we obtain, for every 
$a\in\mathcal{A}$ and $\xi\in\Xi$,
\begin{align}
P_{H_{a,\xi}}\colon (\eta,\nu)\mapsto\left\{
\begin{array}{ll}
\left(\frac{\eta+\nu-c_{a,\xi}}{2}, \frac{\eta+\nu+c_{a,\xi}}{2}\right),
& \quad 
\text{if 
}\eta-\nu+c_{a,\xi}<0;\\
\hspace{0.1cm}\left(\eta,\nu\right),  & \quad  \text{if 
}\eta-\nu+c_{a,\xi}\geq 0.
\end{array} 
\right.
\end{align}
\item\label{R: remark6.2} We deduce from 
\cite[Theorem~3.16]{Livre1} that 
\begin{align}
P_{M^{|\Xi|}\cap 
W}:\RR^{|\mathcal{A}||\Xi|}\mapsto\RR^{|\mathcal{A}||\Xi|}:x
\mapsto\big(\operatorname{mid}\left(0,\bar{x},M_a\right)
\big)_{\substack{a\in\mathcal{A}\\\xi\in\Xi}},
\end{align} where 
$\bar{x}=\frac{1}{|\Xi|}\sum_{\xi\in\Xi}x_{\xi}$ and 
$\operatorname{mid}\colon (a,b,c)\mapsto P_{[a,c]}$ is the 
median function.
\item \label{R: remark6.3}Note that  \begin{align}(\forall 
\xi\in\Xi)\hspace{0.4cm}
V^{+}_{\xi}= 
{\displaystyle\varprod_{(o,d)\in\mathcal{O}\times\mathcal{D}}} 
\underbrace{\Bigg\{ f_{od}\in\RR^{|R_{o,d}|}_{+}\biggm | \sum_{r\in 
R_{o,d}}f_{od,r}=h_{od,\xi} \Bigg\}}_{V^{+}_{od,\xi}}
\end{align}
and it follows from \cite[Proposition 24.11]{Livre1} that 
\begin{equation}
P_{V_{\xi}^{+}}:\RR^{|R|}\mapsto\RR^{|R|}:f\mapsto\Big(P_{V 
^{+}_{od,\xi}}f_{od}\Big)_{(o,d)\in\mathcal{O}\times\mathcal{D}}.\end{equation}
In order to compute the projection onto 
$V^{+}_{od,\xi}$, 
we use the algorithm 
proposed in \cite{Cominetti}, which follows from quasi-Newton method 
onto the dual of the knapsack problem.
\end{enumerate}
\end{remark}
\subsection{Selection criteria for random projections}
\label{sec:select}
Now we establish several criteria for selecting half-spaces
from the constraints as a priori information sets $(S_i)_{1\le i\le m}$.
The resulting choice of $(S_i)_{1\le i\le m}$ lead to different families of 
algorithms. In Section~\ref{sec:numexp}, we compare their numerical 
performance with respect to the base
scheme, in which no projection is applied. 

Note that the feasible set defined by constraints \eqref{rest2_prob_tr} 
in Problem~\ref{prob_transporte} can be written as
\begin{align}
(\forall 
\xi\in\Xi)\hspace{0.3cm}\bigcap_{a\in\mathcal{A}}
P_{\xi}\left(\Sigma_{a,\xi}\right),\end{align}
where 
\begin{align}
\label{ecuacion 38}  
(\forall \xi\in\Xi)(\forall 
a\in\mathcal{A})\hspace{0.3cm}\Sigma_{a,\xi}:=
\left\{(x,f)\in\RR^{|\mathcal{A}||\Xi|}\times\RR^{|R||\Xi|}\colon
N_{a}f_{\xi}-x_{a,\xi}\leq c_{a,\xi}\right\}.
\end{align} 
The a priori information sets $(S_i)_{1\le i\le m}$ are chosen as 
particular intersections of  $(\Sigma_{a,\xi})_{(a,\xi)\in 
\mathcal{A}\times\Xi}$ for guaranteeing feasibility of primal iterates.
In order to obtain simple projection implementations,
note that 
\eqref{ecuacion 38} and \cite[Proposition~29.23]{Livre1} 
yields
\begin{multline}
\label{e:ortog}
(\forall (a,a')\in\mathcal{A}^2)(\forall (\xi,\xi')\in\Xi^2)\\
 \xi\neq\xi'\quad\Rightarrow\quad 
P_{\Sigma_{a,\xi}\cap\Sigma_{a',\xi'}}=
P_{\Sigma_{a,\xi}}\circ P_{\Sigma_{a',\xi'}},
\end{multline}
because the normal vectors to $\Sigma_{a,\xi}$
and $\Sigma_{a',\xi'}$ are orthogonal. This property also holds 
for a vector of finite different scenarios $(\xi_1,\ldots,\xi_m)\in\Xi^m$ 
and arbitrary vector of arcs $(a_1,\ldots,a_m)\in\mathcal{A}^m$.
Based on this property, we propose four classes of algorithms 
in which, for every $i\in\{1,\ldots,m\}$, $S_i$ corresponds to the 
intersection of a
selection of $(\Sigma_{a,\xi})_{(a,\xi)\in 
\mathcal{A}\times\Xi}$ with different scenarios. This allows us to 
obtain explicit projections in our methods.

For defining the sets onto which we project,
for every $l\in\{1,\ldots,|\Xi|\}$, let
$\iota\colon \mathcal{A}^{l}\times D_l\to \{1,\ldots,m\}$ be a 
bijection, 
where $m=|\mathcal{A}^{l}\times D_l|$ and
\begin{align}
D_l=\Big\{(\xi_1,\ldots,\xi_l)\in\Xi^l \mid (\forall i,j\in 
\{1,\ldots,l\})\:\:i\ne 
j\Rightarrow \xi_i\ne\xi_j\Big\}.
\end{align} 
Observe that $D_1=\Xi$.
In order to obtain simple projection implementations
we define, for every $a=(a_i)_{i=1}^l\in\mathcal{A}^{l}$
and $ \xi=(\xi_i)_{i=1}^l\in 
D_{l}$,
\begin{equation}
K^{l}_{\iota(a,\xi)}=\bigcap_{i=1}^{l}\Sigma_{a_i,\xi_i}.
\end{equation}
Note that, for every $i\in\{1,\ldots,m\}$, $K_i^l$ is an intersection of a 
selection of $l$ sets from $(\Sigma_{a,\xi})_{a\in 
	\mathcal{A},\xi\in\Xi}$, where the considered scenarios are all 
different and the arcs are arbitrary. 
Since the selected scenarios are different, we obtain from 
\eqref{e:ortog} the explicit formula
\begin{equation}
(\forall a\in\mathcal{A}^l)(\forall \xi\in D_l)\quad 
P_{K^{l}_{\iota(a,\xi)}}=\prod_{i=1}^{l}P_{\Sigma_{a_i,\xi_i}}.
\end{equation}

For every $l\in\{1,\ldots,|\Xi|\}$, 
we propose four classes of algorithms based on 
Corollary~\ref{C:Corrollary 5.2},
depending on the selection 
of the convex sets $(K_i^l)_{1\le i\le m}$ in which they project, 
for different values of $m\in\{1,\ldots,|\mathcal{A}^l||D_l|\}$. 
\begin{enumerate}
\item[(F)] \textbf{Fixed selection: } We 
fix $j\in\{1,\ldots,|\mathcal{A}^l||D_l|\}$, we set $S=S_j=K_j^l$ and, 
for every iteration $k\in\NN$, we set
$\epsilon_{k}^{-1}(\{j\})=\Omega$. In this class, we project
deterministically in a fixed block of size $l$, in which arcs and 
scenarios are $\iota^{-1}(j)\in\mathcal{A}^l\times D_l$.

\item[(BA)] \textbf{Bernoulli alternating selection: }
We fix the bijection $\iota\colon \mathcal{A}^{l}\times D_l\to 
\{1,\ldots,|\mathcal{A}^l||D_l|\}$, fix $m\le |\mathcal{A}^l||D_l|$,
and, for every $i\in\{1,\ldots,m\}$, we set $S_i=K_i^l$. For every 
iteration 
$k\in\NN$, we set $I_k=\{0,(k\!\!\mod m)+1\}$.
In this class, 
we follow a $\{0,1\}-$Bernoulli random process, as illustrated in 
Remark~\ref{rem:1}\ref{rem:1Bernoulli}, to 
project onto blocks of size $l$ assigned cyclically along iterations.

\item[(DA)] \textbf{Deterministic alternating selection: }
We fix the bijection $\iota\colon \mathcal{A}^{l}\times D_l\to 
\{1,\ldots,|\mathcal{A}^l||D_l|\}$, fix $m\le |\mathcal{A}^l||D_l|$,
and, for every $i\in\{1,\ldots,m\}$, set $S_i=K_i^l$. For every iteration 
$k\in\NN$, we set 
$\epsilon_{k}^{-1}(\{0\})=\emp$.
In this class, 
we project deterministically onto blocks of size $l$ assigned cyclically 
along iterations.

\item[(RK)] \textbf{Random Kaczmarz selection: } We fix 
$m=|\mathcal{A}^{l}\times D_l|$, for every $i\in\{1,\ldots,m\}$, we set
$S_i=K_i^l$, for every  $k\in\NN$, we set $\epsilon_k$ as a 
$\{1,\ldots,m\}$-valued random variable 
 such that, for every $i\in\{1,\ldots,m\}$, 
$\pi^k_{i}=\frac{1}{m}$. In this class of algorithms, we randomly 
project onto a 
block of $l$ 
constraints from different scenarios at each iteration.
\end{enumerate}

\subsection{Numerical experiences}
\label{sec:numexp}
In this section, we apply the four classes of algorithms defined in 
Section~\ref{sec:select} to a specific instance of 
Problem~\ref{prob_transporte}.
We consider the network presented in \cite{Nguyen_dupuis}
(see also \cite{Xiaojun Chen}), represented 
in Figure~\ref{Imagen 1}.
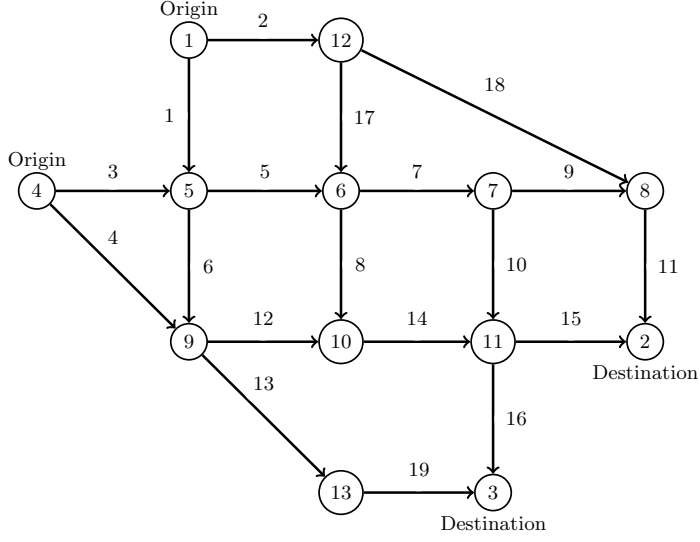
\begin{figure}
\centering
\scalebox{0.8}{\begin{tikzpicture}
\centering
\node (Origin) at (2.5,8) {Origin};
\node (Origin) at (0,5.5) {Origin};
\node (Destination) at (7.5,-0.5) {Destination};
\node (Destination) at (10,2) {Destination};
\begin{scope}[every node/.style={circle,thick,draw}]
\node (1) at (2.5,7.5) {1};
\node (2) at (10,2.5) {2};
\node (3) at (7.5,0) {3};
\node (4) at (0,5) {4};
\node (5) at (2.5,5) {5};
\node (6) at (5,5) {6} ;
\node (7) at (7.5,5) {7};
\node (8) at (10,5) {8};
\node (9) at (2.5,2.5) {9};
\node (10) at (5,2.5) {10};
\node (11) at (7.5,2.5) {11};
\node (12) at (5,7.5) {12} ;
\node (13) at (5,0) {13} ;
\end{scope}
\begin{scope}[
  every node/.style={fill=white,circle},
  every edge/.style={draw=black,very thick}]
  \path [->] (4) edge node[above] {$3$} (5);
  \path [->] (6) edge node[above] {$7$} (7);
  \path [->] (7) edge node[above] {$9$} (8);
  \path [->] (9) edge node[above] {$12$} (10);
  \path [->] (10) edge node[above] {$14$} (11);
  \path [->] (12) edge node[above =0.15 cm] {$18$} (8);
  \path [->] (4) edge node[above =0.15 cm] {$4$} (9);
   \path [->] (1) edge node[left] {$1$} (5);
  \path [->] (12) edge node[right] {$17$} (6);
  \path [->] (6) edge node[right] {$8$} (10);
  \path [->] (7) edge node[right] {$10$} (11);
  \path [->] (8) edge node[right] {$11$} (2);
\end{scope}
\begin{scope}[
  every node/.style={fill=white,circle},
  every edge/.style={draw=black,very thick}]
  \path [->] (1) edge node[above] {$2$} (12);
  \path [->] (5) edge node[above] {$5$} (6);
  \path [->] (11) edge node[above] {$15$} (2);
  \path [->] (13) edge node[above] {$19$} (3);
  \path [->] (9) edge node[above =0.15 cm] {$13$} (13);
  \path [->] (5) edge node[right] {$6$} (9);
  \path [->] (11) edge node[right] {$16$} (3);
\end{scope}
\end{tikzpicture}}
\caption{Network with $|\mathcal{A}|=19$, 
$\mathcal{O}=\{1,4\}$, $\mathcal{D}=\{2,3\}$
, $|R_{1,2}|=8$, 
$|R_{4,3}|=|R_{1,3}|=6$, $|R_{4,2}|=5$, and $|R|=25$
\cite{Nguyen_dupuis}.}
\label{Imagen 1}
\end{figure}
We set $|\Xi|=18$, 
$p_{\xi}\equiv\frac{1}{|\Xi|}$, and 
$(c_{\xi})_{\xi\in\Xi}$ as a sample of the random variable 
$c+\kappa\cdot \mbox{Beta}(20,20)$, where
$c$ and $\kappa$ are in Table \ref{Numerical values}.
The demand 
$(h_{\xi})_{\xi\in\Xi}$ is 
obtained as 
a sample of the random variable $d+s\cdot \text{Beta}(50,10)$, 
where $d=(d_{1,2},d_{1,3},d_{4,2},d_{4,3})=(300,700,500,350)$ and 
$s=(120,120,120,120)$ and we consider the capacity expansion 
limits, for every $a\in\mathcal{A}$, $M_a=200\cdot\kappa_a$. 
The matrix of the quadratic cost of expansion is given by 
$Q=\Id_{|\mathcal{A}|}$ and we consider the travel time function 
\begin{align}
(\forall \xi\in\Xi)(\forall 
a\in\mathcal{A})\hspace{0.3cm}t_{a,\xi}(u):=\eta_{a}+\tau_{a}\frac{u}{c_{a,\xi}},\label{ecuacion
52}
\end{align}
where $\eta$ is in Table \ref{Numerical values} and $\tau:=0.15\eta$. 
Hence, for every $\xi\in\Xi$ and 
$a\in\mathcal{A}$, $\beta_{a,\xi}:=\frac{\tau_{a}}{c_{a,\xi}}$ .
\begin{table}[]
\centering
\begin{tabular}{c c}
\begin{tabular}{|c|c|c|c|}
\hline
Arcs & $c$  & $\kappa$ & $\eta$ \\ \hline
1    & 1100 & 15       & 7      \\ \hline
2    & 484  & 6.6      & 9      \\ \hline
3    & 154  & 2.1      & 9      \\ \hline
4    & 1100 & 15       & 12     \\ \hline
5    & 330  & 4.5      & 3      \\ \hline
6    & 484  & 6.6      & 9      \\ \hline
7    & 1100 & 15       & 5      \\ \hline
8    & 220  & 3        & 13     \\ \hline
9    & 220  & 3        & 5      \\ \hline
10   & 220  & 6        & 9      \\ \hline
\end{tabular} &  
\begin{tabular}{|c|c|c|c|}
\hline
Arcs & $c$ & $\kappa$ & $\eta$ \\ \hline
11   & 770 & 10.5     & 9      \\ \hline
12   & 770 & 10.5     & 10     \\ \hline
13   & 770 & 10.5     & 9      \\ \hline
14   & 770 & 10.5     & 6      \\ \hline
15   & 440 & 6        & 9      \\ \hline
16   & 385 & 5.25     & 8      \\ \hline
17   & 242 & 3.3      & 7      \\ \hline
18   & 220 & 6.6      & 14     \\ \hline
19   & 440 & 10.5     & 11     \\ \hline
\end{tabular}
\end{tabular}
\caption{
Numerical values of $c$, $\kappa$, and $\eta$ on every arc.}
\label{Numerical values}
\end{table}

In this context, we apply the four classes of algorithms defined in 
Section~\ref{sec:select} with $l\in\{1,9,18\}$, which give raise to 
12 algorithms. We compare their numerical performance with respect 
to the method without any projection proposed in 
 \cite{Condat,Vu}, called Algorithm 1. This comparison is performed 
by creating $20$ random instances of the problem,  obtained via 
the random function of MATLAB and using the same seed.  
In Table~\ref{table:Algorithm} 
 we detail the 
algorithm labelling according to the class and the number of 
constraints onto which we project ($l\in\{1,9,18\}$).
For the class of fixed selections, we choose to project onto 
the polyhedron related to capacity constraints of arc $a=16$, i.e., 
$\bigcap\limits^{l}_{j=1}\Sigma_{16,\xi_j}$,
where $(\xi_1,\ldots,\xi_l)\in D_l$. This is 
naturally justified from 
the topology of the network in Figure~\ref{Imagen 1}, since
the total demand arriving to node 3 exceeds largely the capacity 
in arcs $16$ and $19$. Thus, it is mandatory to expand the capacity 
of those arcs. 
Moreover, the arc $16$ has the lowest capacity.

 \begin{table}[]
\begin{tabular}{|l|c|c|c|}
\hline
Class           & $l=1$ & $l=9$ & $l=18$ \\ \hline
(F) Fixed selections                 & Alg. 2     & Alg. 3     & Alg. 4      \\ 
\hline
(BA) Bernoulli alternating selection & Alg. 5     & Alg. 6   & Alg. 7      \\ 
\hline
(DA) Deterministic alternating selection  & Alg. 8     & Alg. 9     & Alg. 
10     
\\ \hline
(RK) Randomized Kaczmarz selection & Alg. 11  & Alg. 12   & Alg. 
13     \\ 
\hline
\multicolumn{4}{|c|}{No projections: Alg.1}                 \\ \hline
\end{tabular}
\caption{
Algorithm labelling according to the class and $l$.}
\label{table:Algorithm}
\end{table}
All algorithms stop at the first iteration when the relative error
is less than a tolerance of $10^{-10}$, where the relative error of the 
iteration $
k\in\NN$ is
\begin{equation}
e_k=\sqrt{\frac{\|x^{k+1}-x^{k}\|^2+\|f^{k+1}-f^{k}\|^2+\|u^{k+1}-u^{k}\|^2+\|v^{k+1}-v^{k}\|^2}{\|x^{k}\|^2+\|f^{k}\|^2+\|u^{k}\|^2+\|v^{k}\|^2}}.
\end{equation}
\begin{table}[]
\centering
\scalebox{0.8}{
\begin{tabular}{c c c}
& \begin{tabular}{|l|c|c|}
\hline Alg                                                                                                 
& Time {[}s{]} & Iter \\ \hline
Alg. 1                                                                                     & 
110.10       & 34206                   \\ \hline

\end{tabular}&\\

$l=1$ & $l=9$ & $l=18$\\
\begin{tabular}{|l|c|c|}
\hline Alg                                                                                                
& Time {[}s{]} & Iter \\ \hline
Alg. 2                   & 108.60       & 34273  \\ \hline
Alg. 5 & 106.91       & 33784             \\ \hline
Alg. 8         & 105.66       & 33342            \\ \hline
Alg. 11       & 105.74      & 33381            \\ \hline
\end{tabular}
&
\begin{tabular}{|l|c|c|}
\hline Alg                                                                                                
& Time {[}s{]} & Iter \\ \hline

Alg. 3                   & 110.31       & 34206               \\ \hline
Alg. 6  & 98.07       & 30850               \\ \hline
Alg. 9           & 90.60       & 28193             \\ \hline
Alg. 12           & 92.57       & 28664             \\ \hline
\end{tabular}
&  \begin{tabular}{|l|c|c|}
\hline Alg                                                                                                 
& Time {[}s{]} & Iter \\ \hline
Alg. 4                   & 112.17       & 34062  \\ \hline
Alg. 7 & 90.05       & 28071\\ \hline
Alg. 10           & 79.86     & 24472                          \\ \hline
Alg. 13         & 83.46        & 25085            \\ \hline
\end{tabular}

\end{tabular}}
\caption{
The average execution time and the average number of iterations of 
each method.}
\label{table1:R25A19E18}
\end{table}
In Table~\ref{table1:R25A19E18} we provide
the average
execution time and the average number of iterations of Algorithms 
$1-13$,  
obtained from 20 random realizations of vectors $(c_\xi)_{\xi\in\Xi}$ 
and 
$(d_\xi)_{\xi\in\Xi}$. 
We see a considerable
decrease on the average time and number of 
iterations of the algorithms in classes
(BA), (DA), and (RK),
as we increase the number of constraints $l$ considered in the 
projections. The algorithms in class (F) remains comparable 
with respect to Algorithm 1. 

In Figure~\ref{Imagen 2} we provide 
the boxplot of \% of improvement in terms of number of iterations of 
Algorithms $2-13$ with respect
to Algorithm~1 (without projections). A similar boxplot in terms of 
computational time is provided in Figure~\ref{Imagen 3}. We verify 
that Algorithms 
2, 3 \& 4 belonging to the (F) class, are comparable in efficiency to
Algorithm~1. We see that all other algorithms have a superior 
performance at the exception of few outliers. In particular, Algorithms
10 (DA) and 13 (RK) exhibit larger gains in performance, reaching up 
to 35\% of improvement in iterations and up to 38\% in computational 
time. We also observe that the algorithms in which we project onto a 
larger number of constraints (larger $l$) have better performance.

\begin{figure}[h]
\centering
\includegraphics[width=1\linewidth]{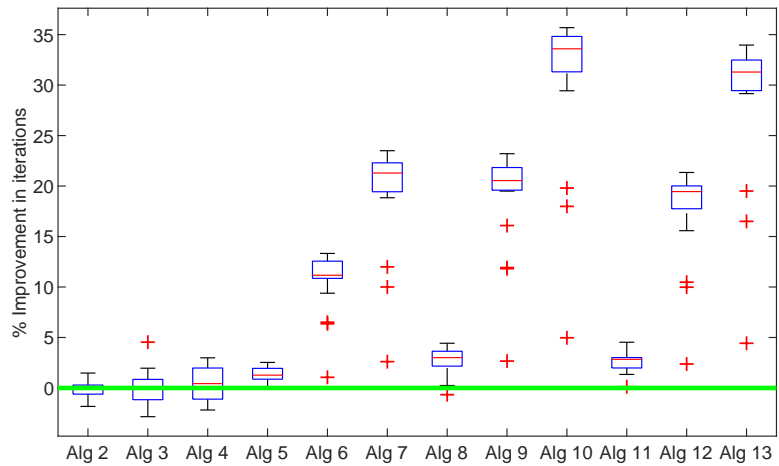}
\caption{Boxplot of \% of improvement in number of iterations for 
Algorithms 2-13 with respect to Algorithm 1.}
\label{Imagen 2}
\end{figure}

\begin{figure}[h]
\centering
\includegraphics[width=1\linewidth]{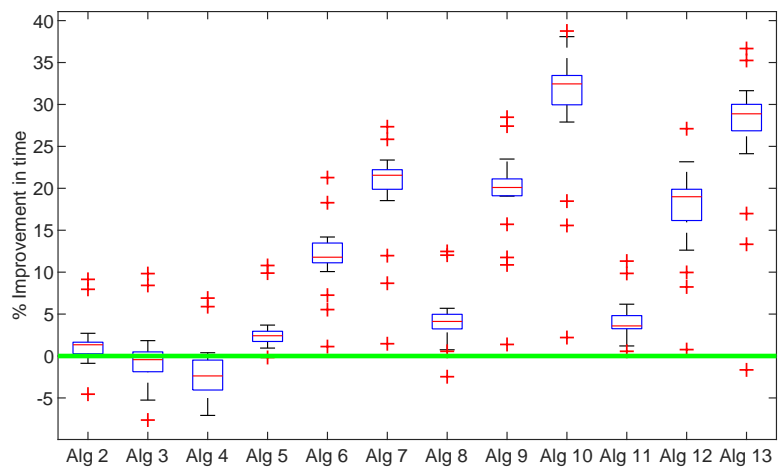}
\caption{Boxplot of \% of improvement in time for 
Algorithms 2-13 with respect to Algorithm 1.}
\label{Imagen 3}
\end{figure}

\begin{table}[]
\centering
\begin{tabular}{c c}
\begin{tabular}{|l|c|c|}
\hline
Arco & $\max_{\xi} (u_{a,\xi}-c_{a,\xi})$ & $x_{a}$  \\ \hline
1    & -394.91                              & 0.00  \\ \hline
2    & 18.20                                & 18.20 \\ \hline
3    & -6.95                                & 0.00  \\ \hline
4    & -187.97                              & 0.00  \\ \hline
5    & 24.92                                & 24.92 \\ \hline
6    & 15.62                                & 15.62 \\ \hline
7    & -633.86                              & 0.00  \\ \hline
8    & -221.02                              & 0.00  \\ \hline
9    & -73.31                               & 0.00  \\ \hline
10   & -95.52                               & 0.00  \\ \hline
\end{tabular}
&\begin{tabular}{|l|c|c|}
\hline
Arco & $\max_{\xi} (u_{a,\xi}-c_{a,\xi})$ & $x_{a}$  \\ \hline
11   & -218.34                              & 0.00  \\ \hline
12   & -164.31                              & 0.00  \\ \hline
13   & 36.20                                & 36.20 \\ \hline
14   & -164.31                              & 0.00  \\ \hline
15   & 17.67                                & 17.67 \\ \hline
16   & 68.42                                & 68.42 \\ \hline
17   & -126.34                              & 0.00  \\ \hline
18   & -78.68                               & 0.00  \\ \hline
19   & 36.20                                & 36.20 \\ \hline
\end{tabular}
\end{tabular}
\caption{Worst scenario 
flow excess  on arcs $(\max_{\xi} (u_ 
{a,\xi}-c_{a,\xi}))_{a\in\mathcal{A}}$ and
arc capacity expansion vector $(x_a)_{a\in\mathcal{A}}$
at the optimum.}
\label{T: Flows}
\end{table}
\begin{figure}
\centering
\scalebox{0.8}{\begin{tikzpicture}
\centering
\node (Origin) at (2.5,8) {Origin};
\node (Origin) at (0,5.5) {Origin};
\node (Destination) at (7.5,-0.5) {Destination};
\node (Destination) at (10,2) {Destination};
\begin{scope}[every node/.style={circle,thick,draw}]
\node (1) at (2.5,7.5) {1};
\node (2) at (10,2.5) {2};
\node (3) at (7.5,0) {3};
\node (4) at (0,5) {4};
\node (5) at (2.5,5) {5};
\node (6) at (5,5) {6} ;
\node (7) at (7.5,5) {7};
\node (8) at (10,5) {8};
\node (9) at (2.5,2.5) {9};
\node (10) at (5,2.5) {10};
\node (11) at (7.5,2.5) {11};
\node (12) at (5,7.5) {12} ;
\node (13) at (5,0) {13} ;
\end{scope}
\begin{scope}[
  every node/.style={fill=white,circle},
  every edge/.style={draw=black,very thick}]
  \path [->] (4) edge node[above] {$3$} (5);
  \path [->] (6) edge node[above] {$7$} (7);
  \path [->] (7) edge node[above] {$9$} (8);
  \path [->] (9) edge node[above] {$12$} (10);
  \path [->] (10) edge node[above] {$14$} (11);
  \path [->] (12) edge node[above =0.15 cm] {$18$} (8);
  \path [->] (4) edge node[above =0.15 cm] {$4$} (9);
   \path [->] (1) edge node[left] {$1$} (5);
  \path [->] (12) edge node[right] {$17$} (6);
  \path [->] (6) edge node[right] {$8$} (10);
  \path [->] (7) edge node[right] {$10$} (11);
  \path [->] (8) edge node[right] {$11$} (2);
\end{scope}
\begin{scope}[
  every node/.style={fill=white,circle},
  every edge/.style={draw=red,very thick}]
  \path [->] (1) edge node[above] {$2$} (12);
  \path [->] (5) edge node[above] {$5$} (6);
  \path [->] (11) edge node[above] {$15$} (2);
  \path [->] (13) edge node[above] {$19$} (3);
  \path [->] (9) edge node[above =0.15 cm] {$13$} (13);
  \path [->] (5) edge node[right] {$6$} (9);
  \path [->] (11) edge node[right] {$16$} (3);
\end{scope}
\end{tikzpicture}}
\caption{Graphical representation of the expanded arcs (in red, 
$x_a>0$) at the optimum.}
\label{G: Graph1}
\end{figure}
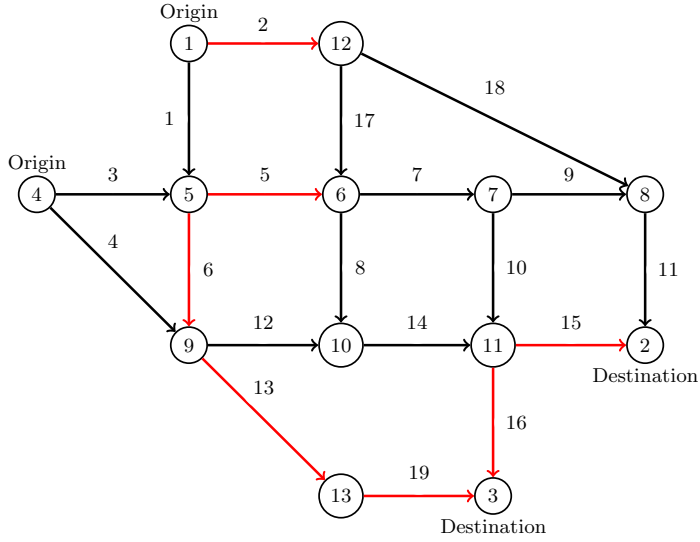

In terms of the obtained solution, 7 arcs are 
expanded and the expansion capacity coincides with the extra flow 
needed in the equilibrium for the worst scenario.
Finally, in Table~\ref{T: Flows} we show, at the optimum, the flow 
excess at each arc
in the worst scenario and the corresponding arc capacity expansion
for one of the 20 random realizations. 
We verify that the arc capacity expansion coincides with the worst 
scenario flow excess on arcs where the excess is strictly positive.
In the arcs in which there is a slack on the capacity, the expansion 
is zero. In Figure~\ref{G: Graph1} we represent the expanded arcs.

\section{Conclusions}
In this work, we provide a new primal-dual algorithm for solving 
monotone inclusions with a priori information. The a priori 
information  is represented via fixed point sets of a finite number of 
nonexpansive operators, which are activated 
randomly/deterministically in our proposed method.
We apply four classes of algorithms 
with different activation schemes for solving convex optimization with 
a priori information
and, in particular, to the arc capacity expansion problem on 
traffic networks. We observe an improvement up to 35\% in 
computational time
for the algorithms including randomized and alternating 
projections with respect to the method without projections, 
justifying the advantage of our approach.

\begin{acknowledgements}
The work of the first and second authors are founded by the National 
Agency for Research and Development (ANID) under grants FONDECYT 
1190871 and FONDECYT 11190549, respectively. The third autor 
thanks to the ``{Direcci\'on de Postgrados y Programas de la 
Universidad T\'ecnica Federico Santa Mar\'ia}''.
\end{acknowledgements}


\end{document}